\documentclass{amsart}

\usepackage{amsthm,amssymb,amsmath,hyperref}
\usepackage{graphicx}
\usepackage{color}
\usepackage{tikz}
\usepackage[utf8]{inputenc}


\vfuzz2pt 
\hfuzz2pt 
\theoremstyle{plain}
\newtheorem{thm}{Theorem}[section]

\newtheorem{cor}[thm]{Corollary}
\newtheorem{lem}[thm]{Lemma}
\newtheorem{prop}[thm]{Proposition}
\newtheorem{alg}[thm]{Algorithm}
\theoremstyle{definition}
\newtheorem{defn}[thm]{Definition}

\theoremstyle{remark}
\newtheorem{rem}[thm]{Remark}
\newtheorem{exa}[thm]{Example}
\theoremstyle{plain}

\numberwithin{equation}{section}


\newcommand{\del}{\delta}

\newcommand{\R}{{\mathbb R}}

\newcommand{\N}{{\mathbb N}}

\newcommand{\Z}{{\mathbb Z}}

\newcommand{\calA}{{\mathcal A}}

\newcommand{\calC}{{\mathcal C}}
\newcommand{\calD}{{\mathcal D}}

\newcommand{\calG}{{\mathcal G}}

\newcommand{\calL}{{\mathcal L}}

\newcommand{\calS}{{\mathcal S}}

\newcommand{\bfVa}{{\Vec{\textbf{a}}}}

\newcommand{\frakL}{{\mathfrak L}}

\def\udot#1{\ifmmode\oalign{$#1$\crcr\hidewidth.\hidewidth
    }\else\oalign{#1\crcr\hidewidth.\hidewidth}\fi}

\def\R{\mathbb{R}}
\def\Z{\mathbb{Z}}

\begin{document}
	
\title[]{On the general dyadic grids on $\R^d$}
\author{Theresa C. Anderson and Bingyang Hu}

\address{Theresa C. Anderson: Department of Mathematics, Purdue University, 150 N. University St., W. Lafayette, IN 47907, U.S.A.}%
\email{tcanderson@purdue.edu}

\address{Bingyang Hu: Department of Mathematics, University of Wisconsin, Madison, 480 Lincoln Dr., Madision, WI 53705, U.S.A.}%
\email{bhu32@wisc.edu}

\begin{abstract}
Adjacent dyadic systems are pivotal in analysis and related fields to study continuous objects via collections of dyadic ones.  In our prior work (joint with Jiang, Olson and Wei) we describe precise necessary and sufficient conditions for two dyadic systems on the real line to be adjacent.  Here we extend this work to all dimensions, which turns out to have many surprising difficulties due to the fact that $d+1$, not $2^d$, grids is the optimal number in an adjacent dyadic system in $\mathbb{R}^d$.  As a byproduct, we show that a collection of $d+1$ dyadic systems in $\R^d$ is adjacent if and only if the projection of any two of them onto any coordinate axis are adjacent on $\mathbb{R}$.  The underlying geometric structures that arise in this higher dimensional generalization are interesting objects themselves, ripe for future study; these lead us to a compact, geometric description of our main result.  We describe these structures, along with what adjacent dyadic (and $n$-adic, for any $n$) systems look like, from a variety of contexts, relating them to previous work, as well as illustrating a specific example.

\end{abstract}
\date{\today}

\thanks{}

\maketitle

\tableofcontents
\section{Introduction}
The purpose of this paper is to give an optimal description of adjacent dyadic systems (or more generally,  adjacent $n$-adic systems) in $\R^d$. Dyadic systems are ubiquitous in harmonic analysis, as well as many other fields.  Oftentimes, one wants to understand a continuous operator or object via its dyadic counterparts; our goal is to say, in an optimal and precise fashion, exactly what these dyadic counterparts are.   

The study of continuous objects via dyadic ones is a central theme in analysis and its application to many different areas of mathematics.  For instance, dyadic decompositions and partitions underlie the study of singular integral operators and maximal functions (among others), weight and function classes, partial differential equations, and number theory; our bibliography lists a few out of many references here.  In our recent paper \cite{AHJOW} joint with Jiang, Olson and Wei, we gave a necessary and sufficient condition on characterizing the adjacent $n$-adic systems on $\R$.  Here we generalize these results to higher dimensions.  Though we use ideas from \cite{AHJOW}, the construction of the analogous objects in $\R^d$ is not trivial; indeed we have to adapt our techniques from \cite{AHJOW} to a way that is compatible with the underlying lattice structure inherent in the construction of adjacent $n$-adic systems in $\R^d$. 

Let us begin with the definition of $n$-adic systems in $\R^d$, which is our main object of study. 

\begin{defn}
Given $n \in \N, n \ge 2$, a collection $\calG$ of left-closed and right-open cubes on $\R^d$ (that is, a collection of cubes in $\R^d$ of the form
$$
[a_1, a_1+\ell) \times \dots \times [a_d, a_d+\ell), \quad a_i \in \R, i=1, \dots, d, 
$$
where $\ell>0$ is the \emph{sidelength} of such a cube) is called \emph{a general dyadic grid with base $n$ (or $n$-adic grid)} if the following conditions are satisfied:
\begin{enumerate}
\item [(i).] For any $Q \in \calG$, its sidelength $\ell(Q)$ is of the form $n^k, k \in \Z$;
\item [(ii).] $Q \cap R \in \{Q, R, \emptyset\}$ for any $Q, R \in \calG$;
\item [(iii).] For each fixed $k\in \Z$, the cubes of a fixed sidelength $n^k$ form a partition of $\R^d$.
\end{enumerate}
In particular, if $n=2$, we also refer to such a collection \emph{a dyadic grid}, which is usually denoted by $\calD$. 
\end{defn}

The defining property of such a structure is a certain dyadic covering theorem.  The one that we use is due to Conde Alonso \cite{Conde}, and is optimal in terms of the number of grids required:

\begin{thm} {\cite[Theorem 1.1]{Conde}} \label{CondeThm}
There exists $d+1$ dyadic grids $\calD_1, \dots, \calD_{d+1}$ of $\R^d$ such that every Euclidean ball $B$ (or every cube) is contained in some cube $Q \in \bigcup\limits_{i=1}^{d+1} \calD_i$ satisfying that $\textrm{diam}(Q) \le C_d \textrm{diam}(B)$. The number of dyadic systems is optimal. 
\end{thm}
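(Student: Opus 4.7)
The plan is to prove the existence half by exhibiting $d+1$ grids via explicit scale-dependent translations of the standard dyadic grid, and then to establish optimality by a dimension-counting obstruction. I expect all the real difficulty to lie in the optimality half.

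For the construction, fix $d+1$ translation vectors $v_0, v_1, \dots, v_d \in \R^d$ in suitably generic position---for instance, $v_0 = 0$ and $v_i = \tfrac{1}{3} e_i$ for $i = 1, \dots, d$, where $e_i$ is the $i$-th standard basis vector. Let $\calD^{(0)} = \set{ 2^k([0,1)^d + m) : k \in \Z,\, m \in \Z^d }$ be the standard dyadic grid, and define $\calD_i$ by applying at scale $2^k$ the offset $\omega_i^{(k)} = \sum_{j > k} 2^j v_i$. This is the natural higher-dimensional analogue of the classical one-dimensional $1/3$-shift trick, and the telescoping structure of $\omega_i^{(k)}$ guarantees that each $\calD_i$ is a bona fide dyadic grid in the sense of the preceding definition. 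To verify the covering property, given a ball $B$ of diameter $r$ I would pick $k \in \Z$ with $2^{k} \le r < 2^{k+1}$ and search for a containing cube among those of sidelength $2^{k+2}$; such a cube in $\calD_i$ exists if and only if the center of $B$ lies at distance at least $r$ from the boundary of the unique $\calD_i$-cube at that scale containing it, so the ``bad'' centers form, per grid, a union of $d$ coordinate-axis slabs of width $O(r)$ inside each lattice cell. The core combinatorial point is that the generic choice of the $v_i$ forces the intersection of the $d+1$ bad sets to be empty modulo the lattice, so every $B$ is covered by at least one grid.

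The sharpness of the count $d+1$ is the main obstacle. Working in the quotient torus $\R^d / 2^{k+2}\Z^d$, each candidate grid contributes a bad set that is a translate of a fixed tubular neighborhood of the coordinate hyperplane arrangement; I would show by a combination of volume estimates and a rank argument on the slab normals that the union of any $d$ such neighborhoods cannot cover the torus, and then any ball centered at an uncovered point fails to fit inside a comparably-sized cube of any of the $d$ chosen grids. The obstruction is already visible in $\R^2$---three slanted strips are needed to cover a square, but two never suffice---and I expect the principal technical work to be in promoting this picture to general $d$ in a way that is robust to the scale-dependent translations and to the freedom in choosing the $d$ shift vectors.
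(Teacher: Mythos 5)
First, note that the paper does not prove this statement at all: it is quoted verbatim from Conde Alonso \cite{Conde} and used as a black box, so the comparison below is against the known argument rather than a proof in this paper.

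Your construction for the existence half fails. Taking $v_0=0$ and $v_i=\tfrac13 e_i$ shifts grid $\calD_i$ only in the $i$-th coordinate, so for every $j\neq i$ the projections of $\calD_0$ and $\calD_i$ onto the $j$-th axis are \emph{identical} grids (and the same is true for $\calD_i$ and $\calD_{i'}$ in any coordinate $j\notin\{i,i'\}$). Concretely, the hyperplane $\{x_1=0\}$ lies in the boundary of every grid except $\calD_1$ at all small scales, and $\{x_2=0\}$ lies in the boundary of every grid except $\calD_2$; a small ball centered at a point with $x_1=x_2=0$ therefore straddles a boundary hyperplane of \emph{every} one of your $d+1$ grids at every comparable scale, and is contained in no comparably sized cube. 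This is exactly the failure mode of the paper's non-example with $\del_1=(0,0)$, $\del_2=(0,\tfrac13)$, $\del_3=(\tfrac13,\tfrac13)$, and it violates condition (1) of Theorem \ref{mainresult01} (the differences must be $n$-far in \emph{every} coordinate, for \emph{every} pair). The appeal to ``generic position'' cannot rescue this: being $2$-far means having bounded runs of $0$'s and $1$'s in the binary expansion, which is a measure-zero (though carefully constructible) condition, so a randomly perturbed shift fails almost surely. A working choice must make all coordinates of all pairwise differences $n$-far simultaneously, e.g.\ $(\tfrac13,\dots,\tfrac13)$, $(\tfrac23,\dots,\tfrac23)$, $(\tfrac15,\dots,\tfrac15)$ in $\R^2$, and one must additionally verify the large-scale condition (2) on the location functions determined by your telescoping offsets, which your sketch never addresses (and which also fails for your $v_i$, since the location functions of $\calD_0$ and $\calD_i$ agree in every coordinate $j\ne i$, forcing the liminf in condition (2) to be $0$).

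The optimality argument is also inverted. To show that $d$ grids never suffice you need a center at which \emph{all} $d$ grids fail, i.e.\ a point in the \emph{intersection} of the $d$ bad sets, not a point outside their union (a point outside the union would be covered by every grid). The correct argument, sketched in the proof of Lemma \ref{20200219lem01}, is a pigeonhole/intersection one, not a volume estimate: each grid's boundary at a given scale contains hyperplanes orthogonal to every coordinate direction, so from the $i$-th grid one selects a boundary hyperplane orthogonal to $e_i$; these $d$ hyperplanes in $d$ distinct directions meet in a point $a$ lying on the boundary of all $d$ grids at that scale and all finer ones, and a ball of radius $\eps$ centered at $a$ can only be covered by a cube of sidelength at least the coarse scale, contradicting adjacency as $\eps\to0$. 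A volume bound on the union of $d$ slab neighborhoods is neither needed nor true in the generality you would require.
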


We make a remark that the optimal number $d+1$ in Theorem \ref{CondeThm} plays an important role throughout this paper.
Motivated by Theorem \ref{CondeThm}, we introduce the following definition of \emph{adjacent $n$-dic systems in $\R^d$}, our main object of study.

\begin{defn} \label{repdyadic}
Given $d+1$ many $\calG_1, \dots, \calG_{d+1}$ $n$-adic grids, we say they are \emph{adjacent} if for any cube $Q \subseteq \R^d$ (or any ball), there exists $i \in \{1, \dots, d+1\}$, and $R \in \calG_i$, such that
\begin{enumerate}
    \item [(1).] $Q \subseteq R$;
    \item [(2).] $\ell(R) \le C_{d, n}\ell(Q)$, where $C_{d, n}$ is a dimension constant that only depends on $d$ and $n$.
\end{enumerate}
\end{defn}

This characterizing property of adjacent dyadic systems is sometimes referred to as \emph{Mei's lemma} due to the work \cite{TM} on the torus (hence the definition we use is sometimes called the optimal Mei's lemma).  This property has been widely explored in a wide array of contexts and settings (see, \cite{PW}, \cite{LPW}, \cite{HK}), and has a long history; see the introduction of \cite{AHJOW} and also the monographs \cite{LN} and \cite{DCU} for details.  The applications of Mei's lemma are vast; adjacent dyadic systems are crucially used in the area of sparse domination (initiated by Lerner to prove the $A_2$ theorem in \cite{AL}, see also \cite{TH}, \cite{CR}, \cite{CDO} among others), functional analysis \cite{C2}, \cite{GJ}, \cite{LPW}, \cite{PW} and measure theory \cite{CP}.

Note that Conde Alonso's theorem only guarantees the existence of a collection of adjacent dyadic systems in $\R^d$; it does not say how to construct such systems in general nor how to tell if a system is adjacent.  Inspired by \cite{AHJOW}, we ask``\emph{what are the necessary and sufficient conditions so that a given collection of $d+1$ $n$-adic grids in $\R^d$ is adjacent?}"

In \cite{AHJOW}, we give a complete answer to this question on the real line, which we will briefly review in Section 2 below. In order to extend these results in \cite{AHJOW} to higher dimensions, we must deal with how $d+1$ $n$-adic grids, instead of only 2, interact with each other. The main idea to overcome such a difficulty is to work on a certain quantified version of the $n$-adic systems.  We introduced a one-dimensional analogue of this in \cite{AHJOW}, however, extending this concept to higher dimensions requires many new ideas.  The geometric structures that we define to quantify adjacency collapse into much simpler concepts on the real line, we provide perspective on this throughout the paper.


\subsection{Statement of the main result.} 

Suppose we are given $d+1$ $n$-adic grids $\calG_1, \dots, \calG_{d+1}$ in $\R^d$. Here is how to verify whether they are adjacent or not. 

\begin{alg}

\textit{Step I:} For each $i \in \{1, \dots, d+1\}$, write
$$
\calG_i:=\calG(\del_i, \calL_{\bfVa_i}),
$$
where 
\begin{enumerate}
    \item [(a).] $\del_i \in \R^d$ is called the \emph{initial position} of $\calG_i$;
    \item [(b).] $\calL_{\bfVa_i}: \N \to \N^d$ is the called \emph{the location function of $\calG_i$}, where $\bfVa_i \in \mathbb M_{d \times \infty} (\Z^n)$ is an infinite matrix with $d$ rows,  infinitely many columns, and entries belonging to $\Z^n$.
\end{enumerate}
Here, the term $\calG(\del_i, \calL_{\bfVa_i})$ is referred as the representation of the $n$-adic grid $\calG$ (see, Section 3 for more detailed information about this concept).

\medskip

\text{Step II:} 
Apply the following theorem, which is the main result of this paper.

\medskip

\begin{thm} \label{mainresult01}
Let $d, n, \del_i$ and $\bfVa_i$ be defined as above. Then the $n$-adic systems $\calG(\del_1, \calL_{\bfVa_1}), \dots, \calG(\del_{d+1}, \calL_{\bfVa_{d+1}})$ are adjacent if and only if the following conditions hold: 
\begin{enumerate}
\item [(1).] For any $\ell_1, \ell_2 \in \{1, \dots, d+1\}$ where $\ell_1 \neq \ell_2$, and $s \in \{1, \dots, d\}$, $\left(\del_{\ell_1} \right)_s-\left(\del_{\ell_2} \right)_s$ is $n$-far, that is, there exists some constant $C(\ell_1, \ell_2, s)>0$, such that for any $m \ge 0$ and $k \in \Z$, there holds
$$
\left| \left(\del_{\ell_1} \right)_s-\left(\del_{\ell_2} \right)_s- \frac{k}{n^m}\right| \ge \frac{C(\ell_1, \ell_2, s)}{n^m}; 
$$

\medskip

\item [(2).] For any $k_1, k_2 \in \{1, \dots, d+1\}, k_1 \neq k_2$, and $s \in \{1, \dots, d\}$, there holds
$$
    0< \liminf_{j \to \infty} \left|\frac{ \left[\calL_{\bfVa_{k_1}}(j)\right]_{s}-\left[\calL_{\bfVa_{k_2}}(j)\right]_{s}}{n^j}\right| \le \limsup_{j \to \infty} \left|\frac{ \left[\calL_{\bfVa_{k_1}}(j)\right]_{s}-\left[\calL_{\bfVa_{k_2}}(j)\right]_{s}}{n^j}\right|<1.
$$
\end{enumerate}
Here and in the sequel, we use $(\del)_s$ to denote the $s$-th component of a vector $\del \in \R^d$.
\end{thm}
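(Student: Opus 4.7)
The plan is to reduce the $d$-dimensional adjacency question to pairwise one-dimensional adjacency of the coordinate projections and then to invoke the 1D characterization already established in our earlier work \cite{AHJOW}. The key intermediate claim is that the grids $\calG_1, \dots, \calG_{d+1}$ are $\R^d$-adjacent if and only if, for every pair $\ell_1 \neq \ell_2$ and every coordinate $s \in \{1, \dots, d\}$, the 1D grids $\pi_s(\calG_{\ell_1})$ and $\pi_s(\calG_{\ell_2})$ are 1D-adjacent on $\R$. Modulo this equivalence, conditions (1) and (2) are exactly the componentwise 1D adjacency conditions from \cite{AHJOW} applied to each pair $(\ell_1, \ell_2)$ and coordinate $s$. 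The reduction itself relies on the product structure of $n$-adic cubes: any cube $R \in \calG_i$ satisfies $R = \pi_1(R) \times \dots \times \pi_d(R)$ with all factors of equal sidelength, so $Q \subseteq R$ with bounded sidelength ratio is equivalent to $\pi_s(Q) \subseteq \pi_s(R)$ for every $s$ at some common $n$-adic scale.

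For the sufficiency direction, I would assume (1) and (2) and apply a pigeonhole argument enabled by $d+1 > d$. By \cite{AHJOW}, the coordinate projections are pairwise 1D-adjacent with some uniform constant $C$. Fix any cube $Q \subset \R^d$; at each coordinate $s$, at most one grid index $i$ can fail to well-contain $\pi_s(Q)$ in the projection $\pi_s(\calG_i)$, since two simultaneously failing projections would violate 1D adjacency for that pair. Summing over the $d$ coordinates yields at most $d$ failing grids in total, so at least one of the $d+1$ grids succeeds in every coordinate simultaneously; the product of the 1D covering intervals, lifted to a common $n$-adic scale, furnishes the required cube $R \in \calG_i$. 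This combinatorial mechanism is precisely what makes $d+1$ the optimal number.

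For the necessity direction, I would argue the contrapositive. If some pair $(\ell_1, \ell_2)$ fails 1D-adjacency at coordinate $s$, then for any $M > 0$ there exists a 1D interval $I_s$ whose smallest covering cube in either $\pi_s(\calG_{\ell_1})$ or $\pi_s(\calG_{\ell_2})$ has sidelength $\ge M |I_s|$. I would build a bad cube $Q \subset \R^d$ of sidelength $|I_s|$ with $\pi_s(Q) = I_s$, which is automatically bad for $\calG_{\ell_1}$ and $\calG_{\ell_2}$ via coordinate $s$. To block the remaining $d-1$ grids $\calG_i$ with $i \notin \{\ell_1, \ell_2\}$, I fix a bijection between these indices and the $d-1$ remaining coordinates, and at each remaining coordinate $t_i$ I center $\pi_{t_i}(Q)$ on a high-depth $n$-adic rational of $\pi_{t_i}(\calG_i)$ — a boundary point inherited at many consecutive scales, whose existence is guaranteed by the nesting of $n$-adic grids. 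The left-closed right-open structure then forces any covering cube of $\pi_{t_i}(Q)$ in $\pi_{t_i}(\calG_i)$ to have sidelength at least that of the deepest straddled boundary, which can be taken $\ge M |I_s|$. The resulting cube is bad for every $\calG_i$, contradicting $\R^d$-adjacency.

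The main obstacle will be executing the necessity argument rigorously: one must show that the adversarial placements at different coordinates can all be achieved at commensurate scales, verify that the boundary straddling produces the claimed covering ratio $\ge M$ after accounting for the common-scale requirement of product cubes, and quantitatively inherit the constants from the 1D machinery of \cite{AHJOW}. The bookkeeping of how the many constants from the 1D pairs aggregate into a single $C_{d,n}$ in both directions is the most delicate portion of the argument.
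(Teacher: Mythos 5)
Your proposal is correct and follows essentially the same route as the paper: the sufficiency argument is the same pigeonhole over $d+1$ grids versus $d$ coordinates at a fixed $n$-adic scale, and the necessity argument is the same construction that assigns each of the remaining $d-1$ grids to a distinct remaining coordinate and straddles a boundary point persisting to coarse generations (the paper's points $p_1,p_2$ and $q_1,q_2$). The only cosmetic difference is that you route through the pairwise coordinate-projection formulation (the paper's Theorem \ref{maincor}) before invoking the one-dimensional characterization, whereas the paper works directly with conditions (1) and (2) and records that equivalence separately.
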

\end{alg}

Note that Theorem \ref{mainresult01} is sharp, in the sense that the number of the dyadic systems is optimal. The proof of the above theorem uses the idea of representation of $n$-adic grids, which was introduced in \cite{AHJOW}. Moreover, combining with the one dimensional result (see, \cite[Theorem 3.8]{AHJOW} or Theorem \ref{dim1thm}),  Theorem \ref{mainresult01} is equivalent to the following result. 

\begin{thm} \label{maincor}
The collection of $n$-adic systems $\calG_1, \dots, \calG_{d+1}$ is adjacent if and only if for any $j \in \{1, \dots, d\}$ and $k_1, k_2 \in \{1, \dots d+1\}, k_1 \neq k_2$, $P_j(\calG_{k_1})$ and $P_j(\calG_{k_2})$ are adjacent on $\R$. 

Here, $P_j$ is the orthogonal projection onto the $j$-th axis of $\R^d$, and for any $n$-adic grid $\calG$, $P_j(\calG)$ is defined to be the collection of all $P_j(Q), Q \in \calG$. 
\end{thm}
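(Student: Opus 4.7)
The plan is to derive Theorem \ref{maincor} as a direct reformulation of Theorem \ref{mainresult01} by invoking the one-dimensional adjacency criterion of \cite[Theorem 3.8]{AHJOW} (i.e.\ Theorem \ref{dim1thm}). The essential observation is that the two conditions appearing in Theorem \ref{mainresult01} are, for each fixed coordinate $s$ and each fixed pair $(k_1,k_2)$, exactly the two conditions that characterize adjacency of the projected one-dimensional grids $P_s(\calG_{k_1})$ and $P_s(\calG_{k_2})$ on $\R$.

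First I would verify the projection claim: if $\calG_i = \calG(\del_i, \calL_{\bfVa_i})$ is an $n$-adic grid on $\R^d$, then $P_j(\calG_i)$ is itself a one-dimensional $n$-adic grid on $\R$, and its representation has initial position $(\del_i)_j$ and location function corresponding to the $j$-th row of $\bfVa_i$. This is essentially a bookkeeping check once the definition of the representation from Section 3 is unpacked: the $d$ coordinates of a cube in $\calG(\del_i,\calL_{\bfVa_i})$ are determined independently of each other by the corresponding components of $\del_i$ and rows of $\bfVa_i$, so that coordinate projection commutes with the construction of the grid from its representation.

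Next I would translate the two hypotheses of Theorem \ref{mainresult01} coordinate-by-coordinate. Condition (1), that $(\del_{\ell_1})_s - (\del_{\ell_2})_s$ is $n$-far for every $s$ and every $\ell_1 \neq \ell_2$, is exactly the one-dimensional shift condition from Theorem \ref{dim1thm} applied to the projected initial positions $(\del_{\ell_1})_s$ and $(\del_{\ell_2})_s$. Condition (2), that
\[
0 < \liminf_{j\to\infty}\left|\frac{[\calL_{\bfVa_{k_1}}(j)]_s - [\calL_{\bfVa_{k_2}}(j)]_s}{n^j}\right|
\le \limsup_{j\to\infty}\left|\frac{[\calL_{\bfVa_{k_1}}(j)]_s - [\calL_{\bfVa_{k_2}}(j)]_s}{n^j}\right| < 1,
\]
is exactly the one-dimensional location condition applied to the $s$-th rows of $\bfVa_{k_1}$ and $\bfVa_{k_2}$. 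With the identification from the first step, these pairs of conditions, taken over all $s$ and all $k_1\neq k_2$, are therefore equivalent to: for each $s\in\{1,\dots,d\}$ and each pair $k_1\neq k_2$, the one-dimensional grids $P_s(\calG_{k_1})$ and $P_s(\calG_{k_2})$ are adjacent on $\R$.

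Putting the two steps together, adjacency of $\calG_1,\dots,\calG_{d+1}$ on $\R^d$ (characterized by Theorem \ref{mainresult01}) is equivalent to pairwise adjacency on $\R$ of all coordinate projections, which is the statement of Theorem \ref{maincor}. The only real obstacle is the first step: one must carefully confirm that the representation machinery is coordinatewise in the precise sense described, so that ``projecting the representation'' and ``representing the projection'' give the same one-dimensional $n$-adic grid. Once this identification is in hand, the equivalence is a direct comparison of the two sets of conditions.
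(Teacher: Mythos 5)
Your proposal is correct and matches the paper's own route: the paper obtains Theorem \ref{maincor} precisely by observing that $P_s(\calG(\del_i,\calL_{\bfVa_i}))$ is the one-dimensional grid represented by $(\del_i)_s$ and the $s$-th row of $\bfVa_i$, so that conditions (1) and (2) of Theorem \ref{mainresult01}, read coordinatewise and pairwise, coincide with the two conditions of Theorem \ref{dim1thm} for the projections. Your extra care about ``projecting the representation'' versus ``representing the projection'' is exactly the bookkeeping the paper leaves implicit.
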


\begin{rem}
Recall that in the classical approach of constructing an adjacent system in $\R^d$, what we usually do is first take any two adjacent dyadic systems (on $\R$) on each coordinate axis, and then take the Cartesian products of these dyadic systems. Note that this will give us a collection of $2^d$ adjacent dyadic systems in $\R^d$. 
We would like to point out that Corollary \ref{maincor} does not follow from this classical approach. First of all, our result is optimal, in the sense that the number of the $n$-adic systems is $d+1$, rather than $2^d$; moreover, our result provides a necessary and sufficient condition to tell whether a collection of $d+1$ $n$-adic grids are adjacent or not, rather than a single construction.
\end{rem}

Another interesting question to ask is whether there is a more inherent geometric approach to study the adjacency of the systems of the $n$-adic grids. More precisely, can we generalize the one dimensional result (see, Theorem \ref{dim1thm}) in a more parallel way, that respects the underlying geometric structure present in $d+1$ adjacent dyadic systems? 

In the second part of this paper, we give an affirmative and precise answer to the above question. The key idea is to  introduce the so-called \emph{fundamental structures of a collection of $d+1$ $n$-adic grids in $\R^d$}. These basic structures allow us to generalize the one dimensional result (see, Theorem \ref{dim1thm}) in a more heuristic way (see, Theorem \ref{mainresult}), whereas Theorem \ref{mainresult01} is much less obviously connected with the geometry of adjacent dyadic systems. The intuition for introducing these structures comes from a first, natural attempt to generalize the results in \cite{AHJOW} to $\R^d$ (see, Remark \ref{20200320rem01} and Section 8.1). Furthermore, all of these constructions are illustrated by a concrete example, which is elaborated on in detail before the proof of the main result (see, Theorem \ref{mainresult}).  This allows the reader to connect the underlying geometry with the results and examples in \cite{AHJOW} in a concrete way.

The novelty in this paper is that we generalize the results in \cite{AHJOW} via \emph{two different ways} that retain the key lattice structure implicit in the proof of \cite{Conde} for $d+1$ grids.  These generalizations are non-trivial, and motivate us to look at the underlying lattice structures inherent in the construction of $d+1$ grids and to expand them in a manner adaptable to the constructions underlying the main result (Theorem 3.8) in  \cite{AHJOW}.  These constructions allow us to better connect the geometry of the lattice with the arithmetic properties outlined in Theorem \ref{mainresult}, and likely will have applications to a variety of other problems in dyadic harmonic analysis.

The outline of this paper is as follows.  Part I begins with a brief reminder of our one dimensional results, followed by relevant definitions to state and prove our main theorem on necessary and sufficient conditions for adjacency -- this statement mirrors the one dimensional results only in notation, and does not shed light at the interesting geometric interactions taking place.  Therefore Part II is devoted to studying these.  Part II fully describes the rich geometry underlying the main result, including the fundamental structures which we define.  These descriptions not only motivate a restating of our main result that is geometrically driven, but provide a clear (and unifying) relationship between our one dimensional result and higher dimensions.  They also allow us to comment on the uniformity of such representations.  Finally, we illustrate everything with a concrete example, first introduced in Part I and revisited in Part II.\\






\part{Background and the proof of the main result.}

In the first part of this paper, we first make a short review of the one dimensional results, which were considered in \cite{AHJOW}. Then using the idea of representation of $n$-adic grids, we prove Theorem \ref{mainresult01}. Finally, we give an example on how to apply our main result.

\section{One dimensional results and some application}

Let us make a brief review of the case $d=1$, which was considered in our early work \cite{AHJOW}. The main question that was under the consideration in \cite{AHJOW} is the following ``\emph{Given two $n$-adic grids $\calG_1$ and $\calG_2$ on $\R$, what is the necessary and sufficient condition so that they are adjacent?}"

We start with recalling the following definition. 

\begin{defn} \label{20200411defn01}
A real number $\del$ is \emph{$n$-far} if there exists $C>0$ such that
\begin{equation}
\label{C delta}
  \left| \del-\frac{k}{n^m} \right| \ge \frac{C}{n^m}, \quad \forall m \ge 0, k \in \Z
\end{equation}
where $C$ may depend on $\del$ but independent of $m$ and $k$. 
\end{defn}

The key idea in \cite{AHJOW} to study this problem is to quantify each $n$-adic grids. More precisely, for any $n$-adic system $\calG$ on $\R$, we can find a number $\del \in \R$, and an infinite sequence $\textbf{a}:=\{a_0, a_1, \dots, a_j, \dots\} \in \{0, \dots, n-1\}^\infty$,  such that $\calG$ can be represented as $\calG(\del, \calL_{\textrm{\textbf{a}}})$, where  $\calL_{\textrm{\textbf{a}}}: \N \to \N$ is called the \emph{location function associated to \textbf{a}}, which is defined by 
$$
\calL_{\textrm{\textbf{a}}}(j):=\sum_{k=0}^{j-1} a_i n^k, \quad j \ge 1
$$
and $\calL_{\textrm{\textbf{a}}}(0)=0$.

Given two $n$-adic systems $\calG_1$ and $\calG_2$, let us write them as $\calG_1=\calG(\del_1, \calL_{\textrm{\textbf{a}}_1})$ and $\calG_2=\calG(\del_2, \calL_{\textrm{\textbf{a}}_2})$. Here is the main result in \cite{AHJOW}. 

\begin{thm} {\cite[Theorem 3.8]{AHJOW}} \label{dim1thm}
The $n$-adic grids $\calG(\del_1, \calL_{\textrm{\textbf{a}}_1})$ and $\calG(\del_2, \calL_{\textrm{\textbf{a}}_2})$ are adjacent if and only if
\begin{enumerate}
    \item [(1).] $\del_1-\del_2$ is $n$-far;
    \item [(2).]   There exists some $0<C_1 \le C_2<1$, such that
    $$
    0<C_1=\liminf_{j \to \infty} \left| \frac{\calL_{\textrm{\textbf{a}}_1}(j)-\calL_{\textrm{\textbf{a}}_2}(j)}{n^j} \right| \le \limsup_{j \to \infty} \left| \frac{\calL_{\textrm{\textbf{a}}_1}(j)-\calL_{\textrm{\textbf{a}}_2}(j)}{n^j} \right|=C_2<1. 
    $$
   
\end{enumerate}
\end{thm}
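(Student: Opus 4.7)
The plan is to reduce adjacency to a single scalar at each level:
\[
D(j) \;:=\; \bigl((\del_1 - \del_2) - (\calL_{\textbf{a}_1}(j) - \calL_{\textbf{a}_2}(j))\bigr) \bmod n^j \;\in\; [0, n^j),
\]
using the convention $\calL_{\textbf{a}_i}(j) = 0$ for $j \le 0$ (at such levels the cubes depend only on $\del_i$). From the representation formula, the cube of $\calG_i$ at level $j$ containing a point $x$ has left endpoint $x - r_i(j,x)$ with $r_i(j,x) = (x - \del_i + \calL_{\textbf{a}_i}(j)) \bmod n^j$, so $r_2(j,x) - r_1(j,x) \equiv D(j) \pmod{n^j}$ for every $x$. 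Writing $\widehat{D}(j) := \min\bigl(D(j), n^j - D(j)\bigr)$ for the signed minimum, an interval $I = [x, x+\ell)$ can fail to fit in the level-$j$ cube of both $\calG_1$ and $\calG_2$ only when $\ell > \widehat{D}(j)$.

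For sufficiency, the key step is the uniform lower bound $\widehat{D}(j) \ge \alpha_0 \cdot n^j$ for every $j \in \Z$, with $\alpha_0 > 0$ depending only on the constants in (1) and (2). I split into three regimes. For $j \le 0$, $\widehat{D}(j) = \min_{k \in \Z} |\del_1 - \del_2 - k n^j|$, and condition (1) with $m = -j$ yields $\widehat{D}(j) \ge C n^j$ directly. For $j \ge j_0$ with $j_0$ chosen large, condition (2) pins $|\calL_{\textbf{a}_1}(j) - \calL_{\textbf{a}_2}(j)|/n^j$ into a compact subinterval of $(0,1)$ and $|\del_1 - \del_2|/n^j \to 0$, jointly forcing $\widehat{D}(j)/n^j$ away from $0$ and $1$. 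For the finitely many intermediate levels $1 \le j \le j_0 - 1$, both $\calL_{\textbf{a}_1}(j) - \calL_{\textbf{a}_2}(j)$ and $n^j$ are integers, so $\widehat{D}(j) \ge \min_{k \in \Z} |\del_1 - \del_2 - k| \ge C$ by condition (1) with $m = 0$, giving a positive lower bound for $\widehat{D}(j)/n^j$ over this finite range. Minimizing the three bounds produces $\alpha_0$. With the bound in hand, given any $I = [x, x+\ell)$ pick the smallest $j$ with $n^j \ge \ell/\alpha_0$; then $\widehat{D}(j) \ge \alpha_0 n^j \ge \ell$, so $I$ fits in at least one of the two level-$j$ cubes, yielding adjacency with constant $1/\alpha_0$.

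For necessity I argue contrapositively. If (1) fails, for every $\eps > 0$ there exist $m \ge 0, k \in \Z$ with $|\del_1 - \del_2 - k/n^m| < \eps/n^m$, which forces $\widehat{D}(-m)/n^{-m} < \eps$; a short interval straddling two nearly aligned boundaries at level $-m$ of $\calG_1$ and $\calG_2$ cannot fit in a cube of either grid with comparable sidelength, defeating adjacency as $\eps \to 0$. If the liminf in (2) is $0$, then along some subsequence $|\calL_{\textbf{a}_1}(j_k) - \calL_{\textbf{a}_2}(j_k)|/n^{j_k} \to 0$, which combined with $|\del_1 - \del_2| = O(1)$ yields $\widehat{D}(j_k)/n^{j_k} \to 0$. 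If the limsup is $1$, then $|\calL_{\textbf{a}_1}(j_k) - \calL_{\textbf{a}_2}(j_k)|$ approaches $n^{j_k}$ from below, and since these are integers the distance of $\calL_{\textbf{a}_1}(j_k) - \calL_{\textbf{a}_2}(j_k)$ to $n^{j_k} \Z$ is $o(n^{j_k})$, again giving $\widehat{D}(j_k)/n^{j_k} \to 0$. In either failure mode the boundary-straddling construction foils adjacency.

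The main obstacle is patching condition (2), which is purely asymptotic, with condition (1), which is strictly arithmetic, to produce the uniform lower bound on $\widehat{D}(j)/n^j$ across \emph{all} scales. The crucial bridge is the integrality of $\calL_{\textbf{a}_i}(j)$ and of $n^j$ for $j \ge 0$, which allows condition (1) with $m = 0$ to prevent any accidental vanishing of $D(j) \bmod n^j$ at the intermediate levels where condition (2) has not yet stabilized. Missing this integrality, or trying to invoke condition (2) at bounded scales, is the natural pitfall.
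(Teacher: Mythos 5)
Your argument is correct, and it is essentially the argument this paper gives (in Section 4) for the $d$-dimensional Theorem \ref{mainresult01}, specialized to $d=1$: your three regimes for the lower bound on $\widehat{D}(j)/n^j$ correspond exactly to Cases I--III of the sufficiency proof (with your integrality trick at intermediate levels replacing the paper's device of enlarging the cube to sidelength $n^N$, and the paper's Case II using the same integrality observation via its claim $K_3-K_4\in\{-1,0,1\}$), while your straddling-interval construction is the one-dimensional version of the paper's $p_1,p_2$ and $q_1,q_2$ arguments for necessity. Two cosmetic points: with the paper's convention that level-$j$ left endpoints are $\del_i+\calL_{\textbf{a}_i}(j)+kn^j$, your $r_i(j,x)$ should read $(x-\del_i-\calL_{\textbf{a}_i}(j))\bmod n^j$ (harmless, since only $\mathrm{dist}(\cdot,n^j\Z)$ enters), and the final adjacency constant is $n/\alpha_0$ rather than $1/\alpha_0$ because of the rounding in the choice of $j$.
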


\begin{rem}
 To check whether $\del_1-\del_2$ is $n$-far or not, it suffices to check whether $T\left( \{\del_1-\del_2\} \right)$ is finite or not, where $\{\cdot\}$ indicate distance to the nearest integer, and for any $\del \in [0, 1)$, $T(\del)$ is defined to be the maximal length of consecutive $0$'s or $n-1$'s in the base $n$ representation of $\del$ (see, \cite[Theorem 2.8]{AHJOW}).
\end{rem}

 Although the representation of a $n$-adic grid is indeed not unique (see, the remark after \cite[Definition 3.11]{AHJOW} for the case $d=1$, or see, Proposition \ref{20200322lem01} for the general case), Theorem \ref{dim1thm} still enjoys some uniformness property.

 \begin{thm} {\cite[Theorem 3.14]{AHJOW}} \label{uniform}
 Under the same assumption of Theorem \ref{dim1thm},  let $\calG(\del'_1, \calL_{\textrm{\textbf{a}}'_1})$ and $\calG(\del'_2, \calL_{\textrm{\textbf{a}}'_2})$ be some other representations of $\calG_1$ and $\calG_2$, respectively. Then either
 $$
        \liminf_{j \to \infty} \left| \frac{\calL_{\textrm{\textbf{a}}'_1}(j)-\calL_{\textrm{\textbf{a}}'_2}(j)}{n^j} \right|=C_1 \quad \textrm{and} \quad \limsup_{j \to \infty} \left| \frac{\calL_{\textrm{\textbf{a}}'_1}(j)-\calL_{\textrm{\textbf{a}}'_2}(j)}{n^j} \right|=C_2
        $$
        or 
        $$
        \liminf_{j \to \infty} \left| \frac{\calL_{\textrm{\textbf{a}}'_1}(j)-\calL_{\textrm{\textbf{a}}'_2}(j)}{n^j} \right|=1-C_2 \quad \textrm{and} \quad \limsup_{j \to \infty} \left| \frac{\calL_{\textrm{\textbf{a}}'_1}(j)-\calL_{\textrm{\textbf{a}}'_2}(j)}{n^j} \right|=1-C_1.
        $$
 
\end{thm}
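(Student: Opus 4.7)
The plan is to pin down exactly how the representation data $(\del, \textbf{a})$ of a single $n$-adic grid on $\R$ can vary, and then feed that into the limit expression. First I would identify a representation $\calG(\del, \calL_{\textbf{a}})$ with the choice of a single real point $\del$, since $\textbf{a}$ is then determined by reading off, for each $k$, the position of the sidelength-$n^k$ cube of $\calG$ containing $\del$ inside its sidelength-$n^{k+1}$ parent. Consequently $\calL_{\textbf{a}}(j)$ equals the offset of the unit cube of $\calG$ containing $\del$ inside the sidelength-$n^j$ cube of $\calG$ containing $\del$: writing $\ell_0(\del)$ and $L_j(\del)$ for these two left endpoints, this gives $\calL_{\textbf{a}}(j) = \ell_0(\del) - L_j(\del) \in [0, n^j)$.

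Next, I would compare two representations $(\del, \textbf{a})$ and $(\del', \textbf{a}')$ of the same grid $\calG$. Since $\ell_0(\del) - \ell_0(\del') \in \Z$ (both are corners of unit cubes of $\calG$) and $L_j(\del) - L_j(\del') \in n^j\Z$, one obtains
$$
\calL_{\textbf{a}}(j) - \calL_{\textbf{a}'}(j) \;=\; M \;-\; \mu_j \, n^j,
$$
where $M := \ell_0(\del) - \ell_0(\del') \in \Z$ is a fixed bounded integer and $\mu_j := \bigl(L_j(\del) - L_j(\del')\bigr)/n^j \in \Z$ counts how many $n^j$-cubes separate $\del$ from $\del'$. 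A short $n$-adic argument shows that $\calG$ admits \emph{at most one persistent boundary} -- a point lying on a cube boundary of $\calG$ at every scale -- so there are at most two asymptotic chains of cubes in $\calG$; consequently $\mu_j$ stabilizes for large $j$ to an integer $\mu \in \{-1, 0, +1\}$, with $\mu = 0$ exactly when $\del, \del'$ belong to the same asymptotic chain and $\mu = \pm 1$ when they sit on opposite sides of the persistent boundary.

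Applying this to each grid yields
$$
\frac{\calL_{\textbf{a}'_1}(j) - \calL_{\textbf{a}'_2}(j)}{n^j} \;=\; \frac{\calL_{\textbf{a}_1}(j) - \calL_{\textbf{a}_2}(j)}{n^j} \;+\; (\mu_1 - \mu_2) \;+\; O(n^{-j}),
$$
and since both ratios lie in $(-1, 1)$ while $C_2 < 1$ by Theorem~\ref{dim1thm}, the combined shift $\eta := \mu_1 - \mu_2$ is forced into $\{-1, 0, +1\}$. Writing $R_j := (\calL_{\textbf{a}_1}(j) - \calL_{\textbf{a}_2}(j))/n^j$ and $R'_j := R_j + \eta + o(1)$: if $\eta = 0$ the absolute values agree in the limit and we recover the first alternative $(C_1, C_2)$; if $\eta = \pm 1$ the constraint that both $R_j, R'_j$ lie in $(-1, 1)$ forces them to have opposite signs, so $|R'_j| = 1 - |R_j| + o(1)$, and taking liminf and limsup of this identity swaps the roles to produce $(1 - C_2, 1 - C_1)$. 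The main obstacle will be rigorously establishing the at-most-two-chain dichotomy underlying the $\mu_j \in \{-1, 0, +1\}$ step, including the careful handling of the degenerate case in which $\del$ itself sits on a cube boundary of $\calG$ so that the sequence $\textbf{a}$ carries a $0.\overline{9} = 1.\overline{0}$-style ambiguity; once the classification of representations is in hand, the remaining steps are elementary arithmetic.
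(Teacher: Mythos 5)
Your proposal is correct, and it takes the natural route that the paper's own framework suggests (the paper itself only quotes this result from \cite[Theorem 3.14]{AHJOW} without reproducing the proof): classify how two representations of a single grid can differ, deduce that $\calL_{\textrm{\textbf{a}}'_i}(j)-\calL_{\textrm{\textbf{a}}_i}(j)$ equals a fixed integer plus $\mu_i n^j$ with $\mu_i$ eventually constant in $\{-1,0,1\}$, and then use $R_j, R'_j\in(-1,1)$ together with $0<C_1\le C_2<1$ to force the $(C_1,C_2)$ versus $(1-C_2,1-C_1)$ dichotomy. The step you flag as the main obstacle does go through: from $\nu_{j+1}n=\nu_j+a_j-a'_j$ one sees that $\nu_j=0$ is absorbing and $\nu_j=\pm1$ persists only along all-$0$/all-$(n-1)$ digit tails, which is exactly your persistent-boundary dichotomy. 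One harmless slip: under the paper's convention the offset of $\del$ inside its $j$-th ancestor is $n^j-\calL_{\textrm{\textbf{a}}}(j)$ rather than $\calL_{\textrm{\textbf{a}}}(j)=\ell_0(\del)-L_j(\del)$, but this only flips a sign inside the absolute values and does not affect the argument.
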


With the help of Theorem \ref{maincor}, we can easily generalize Theorem \ref{uniform} to higher dimensions.

\begin{cor} \label{20200412cor01}
Under the same assumption of Theorem \ref{mainresult01}, let $$
\calG(\del_1', \calL_{\bfVa_1'}), \dots , \calG(\del_{d+1}', \calL_{\bfVa_{d+1}'})$$ be some other representations of $\calG_1, \dots, \calG_{d+1}$, respectively. Moreover, for each $k_1, k_2 \in \{1, \dots, d+1\}$ and $s \in \{1, \dots, d\}$, denote
$$
D_1(k_1, k_2, s):=\liminf_{j \to \infty} \left|\frac{ \left[\calL_{\bfVa_{k_1}}(j)\right]_{s}-\left[\calL_{\bfVa_{k_2}}(j)\right]_{s}}{n^j}\right|, 
$$
$$
D_2(k_1, k_2, s):=\limsup_{j \to \infty} \left|\frac{ \left[\calL_{\bfVa_{k_1}}(j)\right]_{s}-\left[\calL_{\bfVa_{k_2}}(j)\right]_{s}}{n^j}\right|,
$$
and $D_1'(k_1, k_2, s), D_2'(k_1, k_2, s)$ similarly, then either
$$
D_1'(k_1, k_2, s)=D_1(k_1, k_2, s) \quad \textrm{and} \quad D_2'(k_1, k_2, s)=D_2(k_1, k_2, s)
$$
or
$$
D_1'(k_1, k_2, s)=1-D_2(k_1, k_2, s) \quad \textrm{and} \quad D_2'(k_1, k_2, s)=1-D_1(k_1, k_2, s).
$$
\end{cor}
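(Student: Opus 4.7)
The plan is to reduce the claim to the one-dimensional uniformness result Theorem \ref{uniform} via the projection criterion of Theorem \ref{maincor}. The key observation is that the representation of an $n$-adic grid in $\R^d$ is compatible with coordinate projections: given a representation $\calG(\del, \calL_{\bfVa})$, the projection $P_s(\calG)$ onto the $s$-th axis admits the one-dimensional representation $\calG\bigl((\del)_s, \calL_{\textbf{a}_s}\bigr)$, where $\textbf{a}_s$ is the $s$-th row of the matrix $\bfVa$. This should follow directly from the construction in Section 3, since by design the matrix rows of $\bfVa$ encode the coordinatewise refinement data of the grid. In particular, $\bigl[\calL_{\bfVa}(j)\bigr]_s = \calL_{\textbf{a}_s}(j)$ for every $j$ and $s$.

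Using this, I would first invoke Theorem \ref{maincor} to conclude that, since the collection $\calG_1,\dots,\calG_{d+1}$ is adjacent in $\R^d$, for each fixed $s \in \{1,\dots,d\}$ and $k_1 \neq k_2$ in $\{1,\dots,d+1\}$ the one-dimensional grids $P_s(\calG_{k_1})$ and $P_s(\calG_{k_2})$ are adjacent on $\R$. Under the particular representations $\calG(\del_{k_i},\calL_{\bfVa_{k_i}})$, the projection $P_s(\calG_{k_i})$ is represented by $\bigl((\del_{k_i})_s,\calL_{(\bfVa_{k_i})_s}\bigr)$, so the numbers $D_1(k_1,k_2,s)$ and $D_2(k_1,k_2,s)$ are precisely the constants $C_1$ and $C_2$ that Theorem \ref{dim1thm} assigns to this pair of one-dimensional adjacent grids.

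Next, I would fix a triple $(k_1,k_2,s)$ and consider the alternate representations $\calG(\del'_{k_i},\calL_{\bfVa'_{k_i}})$ of $\calG_{k_i}$. By the same coordinate-projection principle, these project to alternate representations $\bigl((\del'_{k_i})_s,\calL_{(\bfVa'_{k_i})_s}\bigr)$ of the same one-dimensional grids $P_s(\calG_{k_i})$. Applying Theorem \ref{uniform} directly to these two representations of the fixed adjacent pair $P_s(\calG_{k_1}),P_s(\calG_{k_2})$ on $\R$ produces exactly the dichotomy stated in the corollary: either both extremal limits are preserved, or they are swapped to $1-D_2$ and $1-D_1$. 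Running this reduction separately for every triple $(k_1,k_2,s)$ yields the full conclusion.

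The main (and only) step requiring care is the compatibility between $d$-dimensional representations and coordinate projections, namely that $P_s$ sends every representation of $\calG$ in $\R^d$ to a valid representation of $P_s(\calG)$ in $\R$, and moreover sends genuinely different representations in $\R^d$ to the corresponding different representations in $\R$. Once this bookkeeping is settled from the definitions in Section 3, the rest of the argument is a purely coordinatewise appeal to the one-dimensional uniformness theorem and requires no new ideas.
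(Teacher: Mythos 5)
Your proposal is correct and follows essentially the same route the paper intends: the paper's own (one-line) proof cites the projection criterion (Theorem \ref{maincor}) together with the one-dimensional uniformness result (Theorem \ref{uniform}), which is exactly your reduction, and the coordinate-compatibility fact $\left[\calL_{\bfVa}(j)\right]_s=\calL_{\textbf{a}_s}(j)$ that you flag as the only delicate point does indeed follow directly from Definition \ref{repsgrids}. You have in fact supplied more detail than the paper, which leaves the entire argument to the reader.
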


\begin{proof}
Corollary \ref{20200412cor01} is an easy consequence of Theorem \ref{maincor} and Theorem \ref{mainresult01}, and we would like to leave the detail to the interested reader. 
\end{proof}

\section{Representation of $n$-adic grids}

Let us extend the concept of the representation of $n$-adic grids to higher dimension. The setting is as follows. 

\begin{enumerate}
    \item [(1).] $\del \in \R^d$, in particular, $\del$ should be thought as a vertex of some cube belonging to the $0$-th generation, and we may think it as the ``initial point" of our $n$-adic system;
    \item [(2).] An infinite matrix
    \begin{equation} \label{20200226eq02}
    \Vec{\textbf{a}}:=\left\{\Vec{a}_0, \dots, \Vec{a}_j, \dots \right\},
    \end{equation}
    where $\Vec{a}_j \in \{0,1,\dots , n-1\}^d, j \ge 1$;
    \item [(3).] The \emph{location function} associated to $\Vec{\textbf{a}}$: 
    $$
    \calL_{\bfVa}: \N \longmapsto \Z^d, 
    $$
    which is defined by 
    $$
\calL_{\bfVa}(j):=
\begin{cases}
\sum\limits_{i=0}^{j-1} n^i\Vec{a}_i, \hfill \quad \quad \quad j \ge 1;\\
\\ 
\Vec{0}, \hfill  \quad \quad \quad j=0. 
\end{cases}
$$
\end{enumerate}

For a vector $\del \in \R^d$, we use the notation $(\del)_i, 1 \le i \le d$ refers to the $i$-th component of $\del$.  Note that we will frequently be working with sets of $d$ vectors in $\R^d$, which we label $\del_1, \dots \del_d$.  Therefore the parentheses distinguish the selection from the components: $(\del_i)_s$ is the $s$-th component of the vector $\del_i$.

\begin{defn} \label{repsgrids}
Let $\del \in \R^d$, $\bfVa$ and $\calL_{\bfVa}$ be defined as above. Let $\calG(\del, \calL_{\bfVa})$ be the collection of the following cubes:
\begin{enumerate}
    \item [(1).] For $m \ge 0$, the $m$-th generation of $\calG(\del, \calL_{\bfVa})$ is defined as
\begin{eqnarray*}
&&\calG(\del)_m:=\calG(\del, \calL_{\bfVa})_m:=\Bigg\{ \left[ (\del)_1+\frac{k_1}{n^m}, (\del)_1+\frac{k_1+1}{n^m} \right) \times \dots \\
&& \quad \quad \quad  \quad \quad \quad \quad \quad \quad \quad  \quad \quad \quad \times \left[ (\del)_d+\frac{k_d}{n^m}, (\del)_d+\frac{k_d+1}{n^m} \right) \bigg | (k_1, \dots, k_d) \in \Z^d \Bigg\}.
\end{eqnarray*}

We make a remark sometimes we drop the dependence of the location function here, since location function only contributes to the negative generations;

\item [(2).] For $m<0$, the $m$-th generation is defined as 
\begin{eqnarray*}
&& \calG(\del, \calL_{\bfVa})_m:=\Bigg\{ \left[ (\del)_1+\left[ \calL_{\bfVa}(-m) \right]_1+\frac{k_1}{n^m}, (\del)_1+\left[ \calL_{\bfVa}(-m) \right]_1+\frac{k_1+1}{n^m} \right) \times \dots  \nonumber \\
&& \quad \quad  \times \left[ (\del)_d+\left[ \calL_{\bfVa}(-m) \right]_d+\frac{k_d}{n^m}, (\del)_d+\left[ \calL_{\bfVa}(-m) \right]_d+\frac{k_d+1}{n^m} \right)  \bigg | (k_1, \dots, k_d) \in \Z^d \Bigg\}. 
\end{eqnarray*}
\end{enumerate}

Or equivalently, 
\begin{enumerate}
    \item [(1).] For $m \ge 0$, the vertices of all $m$-th generation is defined as
    $$
    \calA(\del)_m:=\calA(\del, \calL_{\bfVa})_m:=\left\{ \del+\frac{\Vec{k}}{n^m}, \Vec{k} \in \Z^d \right\};
    $$
    
    \item [(2).] For $m<0$, the vertices of all $m$-th generation is defined as
    $$
    \calA(\del, \calL_{\bfVa})_m:=\left\{ \del+
    \calL_{\bfVa}(-m)+\frac{\Vec{k}}{n^m}, \Vec{k} \in \Z^d \right\};
    $$
\end{enumerate}

Note that in the above definition, the term $\del+\calL_{\bfVa}(-m)$ can be interpreted as the location of the ``initial point" (that is, $\del \in \calA(\del, \calL_{\bfVa})_0$) after choosing $n$-adic parents (with respect to the $0$-th generation) $(-m)$ times.  
\end{defn}

\begin{prop}
$\calG(\del, \calL_{\bfVa})$ is a $n$-adic grid in $\R^d$.
\end{prop}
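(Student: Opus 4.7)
The plan is to verify directly the three defining conditions of an $n$-adic grid from Definition~\ref{repsgrids}. Condition (i) on sidelengths is immediate from the formulas: every cube in $\calG(\del,\calL_{\bfVa})_m$ has sidelength $n^{-m}$, which is of the form $n^k$ with $k=-m\in\Z$.

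Condition (iii) reduces to the observation that for each fixed $m\in\Z$, the set $\calG(\del,\calL_{\bfVa})_m$ is a single translate of the standard half-open $n^{-m}$-cube lattice in $\R^d$: by the vertex description in Definition~\ref{repsgrids}, the translation vector is $\del$ if $m\ge 0$ and $\del+\calL_{\bfVa}(-m)$ if $m<0$. Since the standard half-open cube lattice of a fixed sidelength partitions $\R^d$, so does any translate of it.

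The only real content is condition (ii). I would reduce it to the following claim: for every $m\ge m'$ in $\Z$ and every $Q\in\calG_m$, there is a unique $R\in\calG_{m'}$ with $Q\subseteq R$. Granted this, (ii) follows quickly: if $Q\in\calG_m$ and $\widetilde R\in\calG_{m'}$ with $m'\le m$, then $Q\subseteq R$ for some $R\in\calG_{m'}$; since $R$ and $\widetilde R$ lie in the same generation, (iii) forces $R=\widetilde R$ or $R\cap\widetilde R=\emptyset$, and hence $Q\subseteq\widetilde R$ or $Q\cap\widetilde R=\emptyset$. The claim itself is established by downward induction on $m'$, for which it suffices to show that every cube in $\calG_k$ is contained in a unique cube of $\calG_{k-1}$ for every $k\in\Z$. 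The case $k\ge 1$ is the classical fact that the generation $k-1$ vertices form an index-$n$ sublattice (in each coordinate) of the generation $k$ vertices, both with offset $\del$. The subtler case is $k\le 0$: here the identity
$$
\calL_{\bfVa}(j+1)-\calL_{\bfVa}(j)=n^j\vec{a}_j,\qquad j\ge 0,
$$
applied with $j=-k$ gives $\del+\calL_{\bfVa}(-k+1)=\del+\calL_{\bfVa}(-k)+n^{-k}\vec{a}_{-k}$, after which the existence and uniqueness of the enclosing generation $k-1$ cube reduces, coordinate by coordinate, to finding the unique integer $p_i$ with $np_i\le j_i-[\vec{a}_{-k}]_i<n(p_i+1)$; such a $p_i$ exists and is unique because $j_i-[\vec{a}_{-k}]_i\in\Z$.

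I expect the only obstacle to be the bookkeeping in the $k\le 0$ case, where the location function shifts the offsets between consecutive generations; once the displayed identity is in hand, the nesting reduces to the elementary fact that the integers split into residue classes modulo $n$, and the rest of the argument is routine.
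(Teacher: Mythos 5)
Your proof is correct, but it takes a more self-contained route than the paper. The paper's own proof is a two-line reduction: restrict the grid to each coordinate axis, invoke the one-dimensional representation result from \cite{AHJOW} to see that each restriction is an $n$-adic grid on $\R$, and then appeal (tersely, ``by contradiction'') to the product structure to get the tiling, nesting, and parent/child counts in $\R^d$. You instead verify the three axioms directly in $\R^d$: conditions (i) and (iii) by inspection of the vertex formulas, and condition (ii) by reducing to one-step nesting between consecutive generations, where the only nontrivial point is the large-scale case handled via the identity $\calL_{\bfVa}(j+1)-\calL_{\bfVa}(j)=n^j\vec{a}_j$ and the splitting of $\Z$ into residue classes mod $n$. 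Your residue-class computation in the $k\le 0$ case checks out against Definition~\ref{repsgrids} (the vertices of generation $k-1$ are those of generation $k$ shifted by $n^{-k}\vec{a}_{-k}$ and thinned by a factor of $n$ in each coordinate), and the deduction of (ii) from uniqueness of ancestors plus the partition property of each generation is sound. What your approach buys is an explicit, dimension-uniform argument that does not lean on the earlier one-dimensional paper and makes visible exactly where the location function enters; what the paper's approach buys is brevity and the observation (useful later) that the $d$-dimensional grid is determined by its axis projections.
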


\begin{proof}
If we restrict the grid to each axis, we obtain a $n$-adic grid with respect to that axis \cite{AHJOW}.  Since cubes are a one-parameter family, one can easily see (by contradiction) that cubes of a given level tile the space, two cubes are either contained one in the other or disjoint, each cube has $n^d$ children (each with $1/n^d$ of its parent's size) and exactly one parent.
\end{proof}

\begin{prop} \label{20200322lem01}
Given any $n$-adic grid $\calG$, we can find a $\del \in \R^d$ and an infinite matrix $\bfVa=\{\Vec{a}_0, \dots, \Vec{a}_j, \dots \}$, where $\Vec{a}_j \in \{0, \dots, d-1\}^d, j \ge 1$, such that
$$
\calG=\calG(\del, \calL_{\bfVa}). 
$$
However, this representation may not be unique. 
\end{prop}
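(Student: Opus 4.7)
The plan is to anchor the representation at a single reference cube in the $0$-th generation of $\calG$, observe that the non-negative generations are then automatically fixed by the choice of that cube alone, and finally build the infinite matrix $\bfVa$ by inductively following the parent chain of the reference cube upward to coarser generations.

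First I fix any cube $Q^{(0)} \in \calG$ of sidelength $1$ (existence is guaranteed by property (iii)) and let $\del \in \R^d$ be its lower-left vertex. Property (iii) forces the $0$-th generation of $\calG$ to tile $\R^d$ by unit cubes aligned with $Q^{(0)}$, hence it must coincide with $\calG(\del)_0$. For $m \ge 1$, properties (ii) and (iii) together force each generation-$0$ cube to be partitioned into $n^{md}$ cubes of generation $m$ via the standard $n$-adic subdivision, so generation $m$ of $\calG$ coincides with $\calG(\del)_m$ independently of the choice of $\bfVa$. Thus $\del$ alone pins down the representation on all non-negative generations.

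Next I construct $\bfVa$ recursively along the parent chain. Let $Q^{(-m)}$ denote the unique cube of generation $-m$ in $\calG$ containing $Q^{(-m+1)}$ (with $Q^{(-0)} := Q^{(0)}$); this chain is well-defined by (ii) and (iii). Let $c^{(-m)}$ be its lower-left vertex. Because $Q^{(-m)} \subset Q^{(-m-1)}$ and their sidelengths are $n^m$ and $n^{m+1}$, there exists a unique $\Vec{t}^{(m)} \in \{0,\dots,n-1\}^d$ with $c^{(-m-1)} = c^{(-m)} - n^m \Vec{t}^{(m)}$. I will inductively select $\Vec{a}_m \in \{0,\dots,n-1\}^d$ together with an auxiliary ``carry'' vector $\Vec{k}^{(m)} \in \Z^d$ such that
\[ c^{(-m)} = \del + \calL_{\bfVa}(m) + n^m \Vec{k}^{(m)}. \]
The base case $m=0$ holds trivially with $\Vec{k}^{(0)} = \Vec{0}$. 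For the inductive step, substituting the hypothesis and matching with the target identity at $m+1$ reduces to solving $\Vec{k}^{(m)} - \Vec{t}^{(m)} = \Vec{a}_m + n\Vec{k}^{(m+1)}$ componentwise, which is uniquely achieved by Euclidean division modulo $n$. Once this identity is established for every $m \ge 0$, the generation $-m$ of $\calG$ agrees with $\calG(\del,\calL_{\bfVa})_{-m}$, since an $n$-adic tiling of $\R^d$ by cubes of a fixed sidelength $n^m$ is determined by the lower-left vertex of any single one of its cubes.

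The main obstacle is conceptual bookkeeping rather than any hard estimate: the geometric parent-shifts $-n^m\Vec{t}^{(m)}$ have coordinates in $\{-(n-1),\dots,0\}$, while the entries of $\bfVa$ are required to live in $\{0,\dots,n-1\}$. This mismatch is precisely what the carry vectors $\Vec{k}^{(m)}$ absorb, mediating between the true lattice position $c^{(-m)}$ and the partial sum $\calL_{\bfVa}(m)$. Non-uniqueness of the representation is transparent from the construction: a different reference cube $Q^{(0)}$ produces a different $\del$ (and, via a different parent chain, a different $\bfVa$), yet yields the same grid $\calG$.
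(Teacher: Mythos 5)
Your parent-chain recursion with carries is sound and is essentially the intended adaptation of the one-dimensional argument: once one knows that each generation of $\calG$ is a full translate of the lattice $n^m\Z^d$, the identity $c^{(-m)}=\del+\calL_{\bfVa}(m)+n^m\Vec{k}^{(m)}$ determines $\Vec{a}_m$ by division with remainder and forces the generations of $\calG$ and of $\calG(\del,\calL_{\bfVa})$ to coincide; your remark on non-uniqueness also matches the paper's explicit example in spirit. The genuine gap is the step you invoke at the beginning and again at the end: the assertion that property (iii) forces the generation of a given sidelength to be \emph{aligned} with any single one of its cubes, i.e.\ that ``an $n$-adic tiling of $\R^d$ by cubes of a fixed sidelength is determined by the lower-left vertex of any single one of its cubes.'' For $d\ge 2$ a partition of $\R^d$ by congruent cubes need not be a lattice translate (brick tilings exist), and even (ii) and (iii) together do not force it. Your downward-subdivision argument (which is correct) only aligns two cubes that sit inside a common ancestor, and two cubes of an $n$-adic grid need not have one.

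In fact the missing step is not merely unproved but fails under the definition as written. Take $d=2$ and, for each $k\in\Z$, declare the cubes of sidelength $n^k$ to be $[n^k i, n^k(i+1))\times[n^k j, n^k(j+1))$ for $i\ge 0$, $j\in\Z$, together with $[n^k i, n^k(i+1))\times[n^k j+\tfrac12, n^k(j+1)+\tfrac12)$ for $i<0$, $j\in\Z$. Each scale partitions $\R^2$, and any two cubes are nested or disjoint, since cubes attached to the two half-planes $\{x_1\ge 0\}$ and $\{x_1<0\}$ are always disjoint while within each half-plane one sees an honest translated $n$-adic structure; so (i)--(iii) hold, yet the $0$-th generation is not determined by the single cube $[0,1)^2$ and this grid admits no representation $\calG(\del,\calL_{\bfVa})$ at all. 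To close the gap you must either prove the lattice structure of each generation from additional information (it is automatic for $d=1$, where a partition of $\R$ by intervals of equal length is forced to be an arithmetic progression --- which is why the one-dimensional proof the paper cites never meets this issue), or add a hypothesis excluding such grids. As it stands, the alignment claim is exactly the nontrivial content of the proposition in dimension $d\ge 2$, and it is the one point your proof does not address.
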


\begin{proof}
The proof of this result is an easy modification of \cite[Proposition 4.10]{AHJOW}, and hence we leave the detail to the interested reader. While the fact that such a representation is not unique is also straightforward, one example in $\R^d$ would be
$$
\calG\left( (0, 0), \calL_{\bfVa_1} \right)=\calG\left(\left(2, 2 \right), \calL_{\bfVa_2}\right),
$$
where 
$$
\bfVa_1=\begin{pmatrix}
1 & 0 & 0 & 0 & \dots\\
1 & 0 & 0 & 0 & \dots
\end{pmatrix} \quad \textrm{and} \quad 
\bfVa_2=\begin{pmatrix}
0 & n-1 & n-1 & n-1 & \dots\\
0 & n-1 & n-1 & n-1 & \dots
\end{pmatrix}.
$$
\end{proof}

\section{Proof of the main result}

In this section, we prove the main result Theorem \ref{mainresult01}.

\subsection{Necessity} Suppose $\calG_1=\calG(\del_1, \calL_{\bfVa_1}), \dots, \calG_{d+1}=\calG(\del_{d+1}, \calL_{\bfVa_{d+1}})$ are adjacent. We prove the necessary part by contradiction. 

\medskip

Assume condition (a) fails, that is, there exists some $\ell_1, \ell_2 \in \{1, \dots, d+1\}$ with $\ell_1 \neq \ell_2$ and $s \in \{1, \dots, d\}$, such that for each $N_1 \ge 1$, there exists some $m_1 \ge 0$ and $K \in \Z$, such that
$$
\left| \left(\del_{\ell_1} \right)_s-\left(\del_{\ell_2} \right)_s- \frac{K}{n^{m_1}}\right|< \frac{1}{N_1 n^{m_1}}. 
$$
which implies the distance between the hyperplane $\left\{(x)_s=(\del_{\ell_1})_s\right\}$ and the hyperplane $\left\{(x)_s=(\del_{\ell_2})_s+K/n^{m_1}\right\}$ is less than $1/(N_1 n^{m_1})$. On the other hand, note that
$$
\left\{(x)_s=(\del_{\ell_1})_s\right\} \subset b\left[ \calG_{\ell_1, {m_1}} \right]
$$
and 
$$
\left\{(x)_s=(\del_{\ell_2})_s+\frac{K}{n^{m_1}}\right\} \subset b\left[ \calG_{\ell_2, {m_1}} \right],
$$
where $b\left[ \calG_{\ell_1, m_1} \right]$ is the union of all the boundaries of the cubes in $\calG_{\ell_1}$ with sidelength $1/n^{m_1}$, and similarly for the term $b\left[ \calG_{\ell_2, m_1} \right]$. 

Now the idea is to find two sufficiently closed points on the intersection of the boundaries of these $n$-adic grids. Without loss of generality, let us assume $s=\ell_1=1$ and $\ell_2=2$. Now let us consider the points
$$
p_1:=\{(x)_1=(\del_1)_1\} \cap \{(x)_2=(\del_3)_2 \} \cap \cdots \cap \{(x)_d=(\del_{d+1})_d \}
$$
and
$$
p_2:=\left\{(x)_1=(\del_2)_1+\frac{K}{n^{m_1}}\right\} \cap \{(x)_2=(\del_3)_2 \} \cap \cdots \cap \{(x)_d=(\del_{d+1})_d \} \},
$$
which satisfy the following properties:
\begin{enumerate}
    \item [(a).] $p_1 \in b\left[ \calG_{1, m_1} \right] \cap \bigcap\limits_{t=3}^{d+1} b\left[ \calG_{t, m_1} \right]$ and $p_2 \in \bigcap\limits_{t=2}^{d+1} b\left[ \calG_{t, m_1} \right]$; 
    \item [(b).] $\textrm{dist}(p_1, p_2)=\left| \left(\del_1 \right)_1-\left(\del_2 \right)_1- K/n^m\right|< 1/(N_1 n^{m_1})$. 
\end{enumerate}

Note that property (b) above allows us to choose an open cube $Q$ of radius $1/(N_1 n^{m_1})$ that containing both $p_1$ and $p_2$; while property (a) asserts that if there is a dyadic cube $D \in \calG_{\ell}, \ell \in \{1, \dots, d+1\}$ covering $Q$, then $\ell(D)>1/n^{m_1}$, and hence
$$
\ell(D)>N_1 \ell(Q). 
$$
This will contradict to the second condition in Definition \ref{repdyadic} if we choose $N_1$ sufficiently large (see, Figure \ref{Fig100}).

\begin{center}
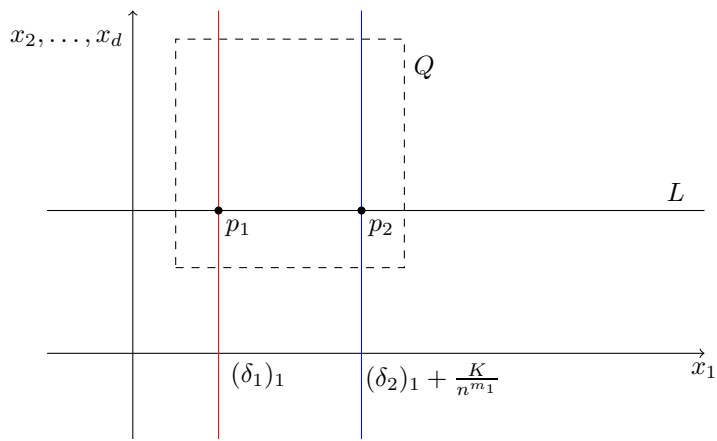
\begin{figure}[ht]
\begin{tikzpicture}[scale=3.8]
\draw (0, -.3) [->] -- (0,1.2);
\draw (-.3, 0) [->] -- (2,0);
\fill (2, 0) node [below] {$x_1$};
\fill (0, 1.1) node [left] {$x_2, \dots, x_d$}; 
\draw [opacity=1, red]  (.3, -.3)--(.3, 1.2);
\fill (.44, 0) node [below] {$(\del_1)_1$};
\draw [opacity=1, blue]  (.8, -.3)--(.8, 1.2);
\fill (1.05, 0) node [below] {$(\del_2)_1+\frac{K}{n^{m_1}}$};
\draw [opacity=1] (-.3, .5)--(2, .5); 
\fill  (.3, .5) circle [radius=.4pt];
\fill  (.8, .5) circle [radius=.4pt];
\fill (.37, .5) node [below] {$p_1$};
\fill (.87, .5) node [below] {$p_2$};
\fill (1.9, .5) node [above] {$L$};
\draw [dashed] (.15, .3)--(.95, .3)--(.95, 1.1)--(.15, 1.1)--(.15, .3); \fill (.95, 1) node [right] {$Q$};
\end{tikzpicture}
\caption{$p_1$, $p_2$, the hyperplane $\{(x)_1=(\del_1)_1\}$ (red part), the hyperplane $\left\{(x)_1=(\del_2)_1+K/n^{m_1}\right\}$ (blue part), the line $L:=\{(x)_2=(\del_3)_2 \} \cap \cdots \cap \{(x)_d=(\del_{d+1})_d \}$, and the cube $Q$ containing both $p_1$ and $p_2$.}
\label{Fig100}
\end{figure}
\end{center}

\medskip

Next, expecting a contradiction again, we assume (b) fails. The proof for this part is very similar to the previous one. Let us consider two different cases. 

\medskip

\textit{Case I:} There exists some $k_1, k_2 \in \{1, \dots, d+1\}, k_1 \neq k_2$ and $s \in \{1, \dots, d\}$, such that
\begin{equation} \label{20200412eq01}
\liminf_{j \to \infty} \left|\frac{ \left[\calL_{\bfVa_{k_1}}(j)\right]_{s}-\left[\calL_{\bfVa_{k_2}}(j)\right]_{s}}{n^j}\right|=0.
\end{equation}
Again, for simplicity, we may assume $s=k_1=1$ and $k_2=2$. By \eqref{20200412eq01}, for any $\varepsilon>0$, there exists some $j_1$ sufficiently large, such that
$$
 \left| \left[\calL_{\bfVa_1}(j_1)\right]_1-\left[\calL_{\bfVa_2}(j_1)\right]_1 \right|<\varepsilon \cdot n^{j_1},
$$
which implies
$$
 \left|\left[\del_1+\calL_{\bfVa_1}(j_1)\right]_1-\left[\del_2+\calL_{\bfVa_2}(j_1)\right]_1 \right|<2\varepsilon \cdot n^{j_1},
$$
since we assume $j_1$ is sufficiently large. Now we can exactly follow the idea in part (a) now. More precisely, we define
$$
q_1:=\{(x)_1=\left[\del_1+\calL_{\bfVa_1}(j_1)\right]_1\} \cap \bigcup_{t=2}^d \left\{(x)_t=\left[\del_{t+1}+\calL_{\bfVa_{t+1}}(j_1) \right]_t \right\}  
$$
and 
$$
q_2:= \bigcup_{t=1}^d \left\{(x)_t=\left[\del_{t+1}+\calL_{\bfVa_{t+1}}(j_1) \right]_t \right\},
$$
which enjoy similar properties as $p_1$ and $p_2$:
\begin{enumerate}
    \item [(a).] $q_1 \in b\left[ \calG_{1, -j_1} \right] \cap \bigcap\limits_{k=3}^{d+1} b\left[ \calG_{k, -j_1} \right]$ and $q_2 \in \bigcap\limits_{k=2}^{d+1} b\left[ \calG_{k, -j_1} \right]$;
    \item [(b).] $\textrm{dist}(q_1, q_2)<2 \varepsilon \cdot n^{j_1}$.
\end{enumerate}
Then desired contradiction will follow by taking an open cube with sidelength $2\varepsilon \cdot n^{j_1}$ containing both $q_1$ and $q_2$, where $\varepsilon$ is sufficiently small.

\medskip

\textit{Case II:}  There exists some $k_1, k_2 \in \{1, \dots, d+1\}, k_1 \neq k_2$ and $s \in \{1, \dots, d\}$, such that
\begin{equation} \label{20200412eq02}
\limsup_{j \to \infty} \left|\frac{ \left[\calL_{\bfVa_{k_1}}(j)\right]_{s}-\left[\calL_{\bfVa_{k_2}}(j)\right]_{s}}{n^j}\right|=1. 
\end{equation}
The proof for the second case is an easy modification of the first one. Indeed, \eqref{20200412eq02} implies that for any $\varepsilon>0$, there exists some $j_2$ sufficiently large, such that either
$$
\left| \left[\del_{k_1}+\calL_{\bfVa_{k_1}}(j_2)\right]_{s}-\left[\del_{k_2}+\calL_{\bfVa_{k_2}}(j_2)+n^j \vec{e}_s\right]_{s} \right|<\varepsilon \cdot n^{j_2},
$$
or
$$
\left| \left[\del_{k_1}+\calL_{\bfVa_{k_1}}(j_2)\right]_{s}-\left[\del_{k_2}+\calL_{\bfVa_{k_2}}(j_2)-n^j \vec{e}_s\right]_{s} \right|<\varepsilon \cdot n^{j_2},
$$
holds, where in the above estimates, $\vec{e}_s$ refers to the stand unit vector in $\R^d$ with $s$-th entry being $1$. The rest of the proof is the same as Case I. 

\medskip

\subsection{Sufficiency}

Suppose the conditions (a) and (b) hold, that is,
\begin{enumerate}
    \item [(1).] For any $\ell_1, \ell_2 \in \{1, \dots, d+1\}$ where $\ell_1 \neq \ell_2$, and $s \in \{1, \dots, d\}$,  there exists some constant $C(\ell_1, \ell_2, s)>0$, such that for any $m \ge 0$ and $k \in \Z$, there holds
\begin{equation} \label{20200412eq101}
\left| \left(\del_{\ell_1} \right)_s-\left(\del_{\ell_2} \right)_s- \frac{k}{n^m}\right| \ge \frac{C(\ell_1, \ell_2, s)}{n^m}; 
\end{equation}

\medskip

\item [(2).] For any $k_1, k_2 \in \{1, \dots, d+1\}, k_1 \neq k_2$, and $s \in \{1, \dots, d\}$, there holds
\begin{equation} \label{20200412eq102}
    0< \liminf_{j \to \infty} \left|\frac{ \left[\calL_{\bfVa_{k_1}}(j)\right]_{s}-\left[\calL_{\bfVa_{k_2}}(j)\right]_{s}}{n^j}\right| \le \limsup_{j \to \infty} \left|\frac{ \left[\calL_{\bfVa_{k_1}}(j)\right]_{s}-\left[\calL_{\bfVa_{k_2}}(j)\right]_{s}}{n^j}\right|<1.
\end{equation}
\end{enumerate}
Denote 
$$
C_1:=\min_{1 \le \ell_1 \neq  \ell_2 \le d+1, 1 \le s \le d} C(\ell_1, \ell_2, s), \quad D_1:=\min_{1 \le k_1 \neq k_2 \le d+1, 1 \le s \le d} D_1(k_1, k_2, s), 
$$
and
$$
D_2:=1-\max_{1 \le k_1 \neq k_2 \le d+1, 1 \le s \le d} D_2(k_1, k_2, s),
$$
where the quantities $D_1(k_1, k_2, s)$ and $D_2(k_1, k_2, s)$ are defined in Corollary \ref{20200412cor01}. Note that $C_1, D_1, D_2>0$ by our assumption \eqref{20200412eq102}. Moreover, by \eqref{20200412eq102}, there exists some $N \in \N$, such that for any  $k_1, k_2 \in \{1, \dots, d+1\}, k_1 \neq k_2$, $s \in \{1, \dots, d\}$ and $j \ge N$, there holds
\begin{equation} \label{20200412eq103}
\frac{D_1}{2}< \left|\frac{ \left[\calL_{\bfVa_{k_1}}(j)\right]_{s}-\left[\calL_{\bfVa_{k_2}}(j)\right]_{s}}{n^j}\right|<1-\frac{D_2}{2}.
\end{equation}

Recall the goal is to show that the collection $\calG_1=\calG(\del_1, \calL_{\bfVa_1}), \dots, \calG_{d+1}=\calG(\del_{d+1}, \calL_{\bfVa_{d+1}})$ is adjacent in $\R^d$. Take some $C>0$ sufficiently small, such that
$$
0<C<\min \left\{C_1, \frac{D_1}{4}, \frac{D_2}{4}, \frac{1}{10} \right\}
$$
Now for any cube $Q$ in $\R^d$, let $m_0 \in \Z$ such that
$$
\frac{C}{n^{m_0+1}} \le \ell(Q)<\frac{C}{n^{m_0}}.
$$
We consider several cases.

\medskip

\textit{Case I: $m_0>0$.} Let us show that $Q$ is contained in some cube in $\calG_{k, m_0}$ for some $k \in \{1, \dots, d+1\}$. We argue by contradiction. If $Q$ is not contained in any cubes in $\calG_{k, m_0}$ for all $k=1, \dots, d+1$. Then for each $k \in \{1, \dots, d+1\}$, there exists some $j_k \in \{1, \dots, d\}$, such that
$$
P_{j_k} (Q) \cap P_{j_k} (\calA_{k, m_0}) \neq \emptyset,
$$
where we recall $\calA_{k, m_0}$ is the collection of all the vertices of the cubes in $\calG_{k, m_0}$. By pigeonholing, there exists some $\ell_1, \ell_2 \in \{1, \dots, d+1\}$ with $\ell_1 \neq \ell_2$, but $j_*:=j_{\ell_1}=j_{\ell_2}$, such that
$$
P_{j_*} (Q) \cap P_{j_*} (\calA_{\ell_1, m_0}), \quad P_{j_*} (Q) \cap P_{j_*} (\calA_{\ell_2, m_0}) \neq \emptyset, 
$$
which implies that there exists some $K_1, K_2 \in \Z$, such that
$$
\left| \left(\del_{\ell_1}\right)_{j_*}+\frac{K_1}{n^{m_0}} -\left(\del_{\ell_2} \right)_{j_*}-\frac{K_2}{n^{m_0}} \right| \le \ell(Q)<\frac{C}{n^{m_0}}< \frac{C(\ell_1, \ell_2, j_*)}{n^{m_0}}, 
$$
which contradicts to \eqref{20200412eq101}. 

\medskip

\textit{Case II: $m_0 \le -N$.} Again, we wish to show that $Q$ is contained in some cubes in $\calG_{k, m_0}$ for some $k \in \{1, \dots, d+1\}$ and we prove it by contradiction. Following the argument in Case I above, we see that there exists some $k_1, k_2 \in \{1, \dots, d+1\}$ with $k_1 \neq k_2$ and $j^* \in \{1, \dots, d\}$, such that 
$$
P_{j^*} (Q) \cap P_{j^*} (\calA_{k_1, m_0}), \quad P_{j_*} (Q) \cap P_{j^*} (\calA_{k_2, m_0}) \neq \emptyset.
$$
This implies there exists some $K_3, K_4 \in \Z$, such that
$$
\left| \left[\del_{k_1}+\calL_{\bfVa_{k_1}}(-m_0) \right]_{j^*}+\frac{K_3}{n^{m_0}}- \left[\del_{k_2}+\calL_{\bfVa_{k_2}}(-m_0) \right]_{j^*}-\frac{K_4}{n^{m_0}} \right|<\frac{C}{n^{m_0}}. 
$$
Note that since we can always choose $N$ sufficiently large, we can indeed reduce the above estimate to
$$
\left| \left[\calL_{\bfVa_{k_1}}(-m_0) \right]_{j^*}+\frac{K_3}{n^{m_0}}- \left[\calL_{\bfVa_{k_2}}(-m_0) \right]_{j^*}-\frac{K_4}{n^{m_0}} \right|<\frac{2C}{n^{m_0}},
$$
which implies
\begin{equation} \label{20200412eq103}
\left| \frac{ \left[\calL_{\bfVa_{k_1}}(-m_0) \right]_{j^*}- \left[\calL_{\bfVa_{k_2}}(-m_0) \right]_{j^*}}{n^{-m_0}}+(K_3-K_4) \right|< 2C. 
\end{equation}
We claim that $K_3-K_4 \in \{-1, 0, 1\}$. Indeed, by the choice of $C$, the right hand side of the above estimate is bounded by $1/5$; on the other hand, by the definition of the location function, we have  
$$
\frac{ \left[\calL_{\bfVa_{k_1}}(-m_0) \right]_{j^*}- \left[\calL_{\bfVa_{k_2}}(-m_0) \right]_{j^*}}{n^{-m_0}} \in (-1, 1). 
$$
The desired claim then follows from these observations and the fact that $K_3-K_4$ is an integer. Therefore, the estimate \eqref{20200412eq103} implies either
$$
\left| \frac{ \left[\calL_{\bfVa_{k_1}}(-m_0) \right]_{j^*}- \left[\calL_{\bfVa_{k_2}}(-m_0) \right]_{j^*}}{n^{-m_0}} \right|< 2C<\frac{D_1}{2}
$$
or
$$
\left| \frac{ \left[\calL_{\bfVa_{k_1}}(-m_0) \right]_{j^*}- \left[\calL_{\bfVa_{k_2}}(-m_0) \right]_{j^*}}{n^{-m_0}} \right|>  1-2C>1-\frac{D_2}{2},
$$
which contradicts \eqref{20200412eq103}. 

\medskip

\textit{Case III: $-N<m_0 \le 0$.} Indeed, we can ``pass" the third case to the second case, by taking a cube $Q'$ containing $Q$ with the sidelength is $n^N$. Applying the second case to $Q'$, we find that there exists some $D \in \calG_k$ for some $k \in \{1, \dots, d+1\}$, such that $Q' \subset D$ and $\ell(D) \le C_4 \ell(Q')$, which clearly implies 
\begin{enumerate}
    \item [(1).] $Q \subset D$;
    \item [(2).] $\ell(D) \le C_4 n^N \ell(Q)$. 
\end{enumerate}

\medskip

The proof is complete. \hfill $\square$

\bigskip
\section{An illustrated example}

We now take some time to illustrate the effects of this theorem with a concrete example.  To begin with, let $\del_1 = \left(\frac{1}{3}, \frac{1}{3} \right), \del_2 = \left( \frac{2}{3}, \frac{2}{3} \right)$ and $\del_3 = \left(\frac{1}{5}, \frac{1}{5} \right)$, $d=2$ and $n=2$.  Also define the location functions via 
$$
\bfVa_1:= \begin{pmatrix}
1 & 0 & 1 & 0 & \dots\\
1 & 0 & 1 & 0 & \dots
\end{pmatrix},
$$
$$
\bfVa_2:=\begin{pmatrix}
0 & 1 & 0 & 1 & \dots\\
0 & 1 & 0 & 1 & \dots
\end{pmatrix}, 
$$
and 
$$
\bfVa_3:=\begin{pmatrix}
0 & 0 & 0 & 0 & \dots\\
0 & 0 & 0 & 0 & \dots
\end{pmatrix}.
$$
We will show that the grids $\calG_1 = \calG(\del_1, \calL_{\bfVa_1})$, $\calG_2 = \calG(\del_2, \calL_{\bfVa_2})$ and $\calG_3 = \calG(\del_3, \calL_{\bfVa_3})$ form adjacent dyadic systems in $\R^2$. Note that this example is optimal, in the sense that any two dyadic systems are not adjacent in $\R^2$. 

We start by verifying Condition (i) in Theorem \ref{mainresult01}, which is straightforward. Clearly, it suffices to show the numbers
$$
\frac{2}{3}-\frac{1}{3}=\frac{1}{3}, \quad \frac{1}{3}-\frac{1}{5}=\frac{2}{15}, \quad \textrm{and} \quad \frac{2}{3}-\frac{1}{5}=\frac{7}{16}
$$
are $2$-far (in the sense of Definition \ref{20200411defn01}). This is an easy exercise due to \cite[Proposition 2.4]{AHJOW} (see, also \cite[Lemma 3]{A}). 

While for the second condition, let us compute all the location functions. Indeed, the cases $j=1,2$ provide the key for the computations.  
$$
\del_1+\calL_{\bfVa_1}(j)= \begin{cases} 
\left(\frac{2^{j}}{3}, \frac{2^{j}}{3} \right),  & \hfill j \ \textrm{even} \\
\\
\left(\frac{2^{j+1}}{3}, \frac{2^{j+1}}{3} \right),  & \hfill j \ \textrm{odd}
\end{cases}, 
\quad 
\del_2+\calL_{\bfVa_2}(j)= \begin{cases} 
\left(\frac{2^{j+1}}{3}, \frac{2^{j+1}}{3} \right), & \hfill j  \ \textrm{even} \\
\\
\left(\frac{2^{j}}{3}, \frac{2^{j}}{3} \right), & \hfill j \ \textrm{odd}
\end{cases}, 
$$
and 
$$
\del_3+\calL_{\bfVa_3}(j) \equiv \left(\frac{1}{5}, \frac{1}{5} \right), \quad j \ge 1. 
$$
The second condition can be easily verified and we would like to leave the detail to the interested reader.

\part{A geometric approach}

In the second part of this paper, we provide an alternative way, based on the underlying geometry of $n$-adic systems, to generalize the one dimensional result Theorem \ref{dim1thm}. This approach is much more intuitive and unifies the proof of both the cases $d=1$ and $d>1$.

\section{Notations}

For $A, B \subset \R^d$, we write the \emph{distance} between them by
$$
\textrm{dist}(A, B):=\inf_{x \in A, y \in B} \|x-y\|_{\R^d}.
$$

\begin{defn}
Let $x \in \R^d$ and $A \subseteq \R^d$, the \emph{natural deviation} between $x$ and $A$ is defined to be 
$$
\textrm{dev}(A, x):=\max_{1 \le k \le d} \textrm{dist}_{\Vec{e}_k} (A, x),
$$
where $\textrm{dist}_{\Vec{e}_k}(A, x)$ denotes the distance between $x$ and $A$ along the direction $\Vec{e}_k$.
\end{defn}

\begin{rem}
Let us make some remarks for the above definition. 
\begin{enumerate}
    \item [(1).] The word ``natural" refers to the fact that we take the natural basis
    $$
    \{\Vec{e}_1, \dots, \Vec{e}_d\}
    $$
    in the definition. In general, one can replace $\{\Vec{e}_1, \dots, \Vec{e}_d\}$ by any other basis in $\R^d$, however, it is enough to take the natural basis in this paper;
    \item [(2).] The word ``deviation" refers to the fact that we take the maximal directional distance along all the directions $\Vec{e}_1, \dots \Vec{e}_d$;
    \item [(3).] In our application later, $A$ will be either a corner set (see, Definition \ref{cornerset}), a modulated corner set (see, Definition \ref{defncorner}), or a union of them. 
\end{enumerate}
\end{rem}

Let us include an easy example for these definitions (see, Figure \ref{Fig7}). In this example, we consider the case $d=2$, $x$ is the point $\left(\frac{1}{2}, \frac{3}{10} \right)$ and $A$ is the red part (which we will refer as a corner set later). Then it is clear that
$$
\text{dist}(x, A)=\frac{3}{10}-\frac{1}{10}=\frac{1}{5} \quad \textrm{and} \quad \text{dev}(x, A)=\frac{1}{2}-\frac{1}{6}=\frac{1}{3}.
$$

\begin{center}
\begin{figure}[ht]
\begin{tikzpicture}[scale=5]
\draw (0, -.3) [->] -- (0,.8);
\draw (-.3, 0) [->] -- (.8, 0);
\draw [opacity=2, red]  (1/6, 2/3)--(1/6, 1/10)--(2/3, 1/10);
\fill [opacity=1, red] (1/6, 1/10) circle [radius=.2pt];
\fill [opacity=1, red] (2/3, 1/10) circle [radius=.2pt];
\fill [opacity=1, red] (1/6, 2/3) circle [radius=.2pt];
\draw [opacity=2, dashed] (0, 2/3)--(1/6, 2/3); 
\fill (0, 2/3) node [left] {$\frac{2}{3}$};
\draw [opacity=2, dashed] (0, 1/10)--(1/6, 1/10); 
\fill (0, 1/6) node [left] {$\frac{1}{10}$};
\draw [opacity=2, dashed] (1/6, 1/10)--(1/6, 0);
\fill (1/6, 0) node [below] {$\frac{1}{6}$};
\draw [opacity=2, dashed] (2/3, 1/10)--(2/3, 0);
\fill (2/3, 0) node [below] {$\frac{2}{3}$};
\fill [opacity=2] (0.5, 0.3) circle [radius=.2pt];
\fill (.5, .3) node [right] {$\left(\frac{1}{2}, \frac{3}{10} \right)$};
\end{tikzpicture}
\caption{Distance and natural deviation}
\label{Fig7}
\end{figure}
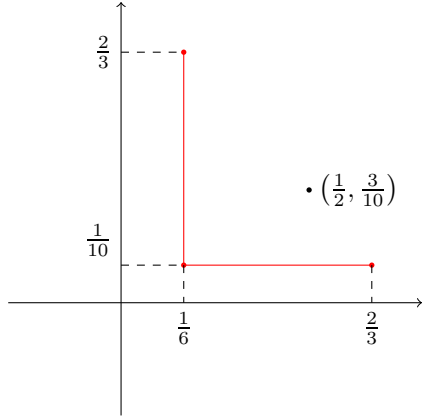
\end{center}

We frequently refer to the offspring and ancestor generations of a given dyadic cube; we often use the letter $m$ to refer to all generations while letter $j$ to specifically reserved for ancestors, this use comes from the roles of $m$ and $j$ in the \emph{shift} and \emph{location}, described below.

\medskip

\section{Fundamental structures of $d+1$ $n$-adic systems in $\R^d$}

In this section, we introduce several basic structures of a collection of $d+1$ $n$-adic systems, where we recall that $d+1$ is optimal in the sense that any $d$ $n$-adic systems in $\R^d$ are not adjacent, while there exists a collection of $d+1$ $n$-adic grids, which are adjacent. Using these basic structure, we are able to to generalize Theorem \ref{dim1thm} to higher dimensions in a natural way. 

This section consists three parts, which are the all generations case, the small scale case and the large scale case. Here, the word ``small scale" refers to all the offspring generations of the $0$-th generation; while the word ``large scale" refers to all the ancestor generations of the $0$-th generation, together with itself. Moreover, we also introduce the concept of $n$-far vector, which generalize the early definition of $n$-far number in one dimensional case (see, Definition \ref{20200411defn01}). Finally, there are also many concrete examples given for the purpose of understanding these structures better. 

Here is a list on all the structures that we are going to introduce in this section.

\begin{enumerate}

\item[$\bullet$]\textit{All generations case (Section 7.1):} Corner sets (see, Definition \ref{cornerset});

\medskip

\item[$\bullet$]\textit{Small scale case (Section 7.2):} Small scale lattice (see, Definition \ref{SSLat}), $n$-far vectors (see, Definition \ref{far number generalization});

\medskip

\item[$\bullet$]\textit{Large scale case (Section 7.3):} Large scale sampling (see, Definition \ref{largessampling}), Large scale lattice (see, Definition \ref{largessc}), Modulated corner sets (see, Definition \ref{defncorner}). 

\end{enumerate}

\subsection{Corner sets}

In the first part of this section, let us introduce an important structure for $\calG(\del, \calL_{\bfVa})$, which can be viewed as a ``generator" of $\calG(\del, \calL_{\bfVa})$.

\begin{defn} \label{cornerset}
Let $\calG(\del, \calL_{\bfVa})$ be a $n$-adic grid in $\R^d$. For $\ell  \in \Z$, we define the \emph{$\ell$-th corner operation}
$$
\calC_{\bfVa}^{\del}(\ell) :\calA(\del, \calL_{\bfVa})_{\ell} \to \R^d
$$
is given by 
$$
\left[\calC_{\bfVa}^{\del}(\ell)\right] (x):= \bigcup_{i=1}^d \prod_i \left(x, \left[(x)_1, (x)_1+n^{-\ell} \right) \times  \left[(x)_d, (x)_d+n^{-\ell} \right)  \right), 
$$
where for any $x \in \R^d$ and $A \subset \R^d$, $\prod_i(x, A)$ denotes the projection of $A$ to the hyperplane $\{y \in \R^d: (y)_i=(x)_i \}$. We call the collection $\left[\calC_{\bfVa}^{\del}(\ell)\right] (x)$ \emph{the $\ell$-th corner set associated to $x$ with parameters $(\del, \bfVa)$}, and we refer $x$ as the \emph{corner} of the corner set $\left[\calC_{\bfVa}^{\del} \right](x)$. 
\end{defn}

Below is an example of a corner set with its corner $x = \left(\frac{1}{3}, \frac{1}{3} \right)$ when $d=2, n=2$ and $\ell=1$ (see, Figure \ref{Fig4}).

\begin{center}
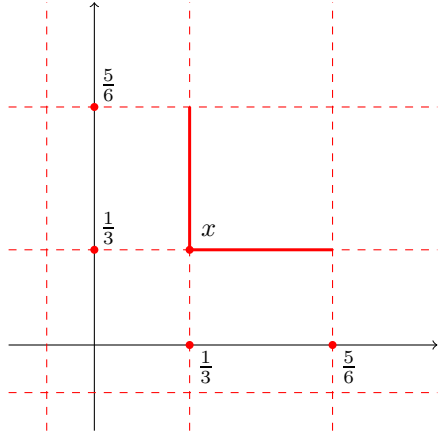
\begin{figure}[ht]
\begin{tikzpicture}[scale=3.8]
\draw (0, -.3) [->] -- (0,1.2);
\draw (-.3, 0) [->] -- (1.2,0);
\fill [opacity=1, red] (1/3, 1/3) circle [radius=.4pt];
\draw [opacity=1, red, dashed]  (1/3, -.3)--(1/3, 1.2);
\draw [opacity=1, red, dashed]  (-.3, 1/3)--(1.2, 1/3);
\draw [opacity=1, red, dashed]  (-.3, 5/6)--(1.2, 5/6);
\draw [opacity=1, red, dashed]  (5/6, -.3)--(5/6, 1.2);
\draw [opacity=1, red, dashed]  (-1/6, -.3)--(-1/6, 1.2);
\draw [opacity=1, red, dashed]  (-.3, -1/6)--(1.2, -1/6);
\draw [opacity=2, line width=0.4mm, red] (1/3, 1/3)--(5/6, 1/3);
\draw [opacity=2, line width=0.4mm, red] (1/3, 1/3)--(1/3, 5/6);
\fill (.4, .35) node [above] {$x$};
\fill (.39, .01) node [below] {$\frac{1}{3}$};
\fill (-.013, .41) node [right] {$\frac{1}{3}$};
\fill (.89, .01) node [below] {$\frac{5}{6}$};
\fill (-.013, .91) node [right] {$\frac{5}{6}$};
\fill [opacity=1, red] (1/3, 0) circle [radius=.4pt];
\fill [opacity=1, red] (0, 1/3) circle [radius=.4pt];
\fill [opacity=1, red] (5/6, 0) circle [radius=.4pt];
\fill [opacity=1, red] (0, 5/6) circle [radius=.4pt];
\end{tikzpicture}
\caption{Corner and Corner set}
\label{Fig4}
\end{figure}
\end{center}

\subsection{Small scale lattices and $n$-far vectors}

The goal of the second part of section is to generalize the concept of far numbers in one dimension to higher dimension. It turns out that the correct thing to do is looking at the so-called \emph{small scale lattices} with respect to $d$ many vectors in $\R^d$. We point out that such a construction is implicitly mentioned in Conde Alonso's proof of showing that $d$ grids are never adjacent \cite{Conde}.  

\begin{defn}
Let $\del \in \R^d$, we say $\del$ is of \emph{unit size} if $(\del)_i \in [0, 1)$ for each $i \in \{1, \dots, d\}$. 
\end{defn}

Let us first state an easy fact for the adjacent $n$-adic systems.
 
\begin{lem} \label{20200219lem01}
Let $\calG_1, \dots, \calG_{d+1}$ be adjacent, with each of them having a representation
$$
\calG_i=\calG(\del_i, \calL_{\bfVa_i}), \quad 1 \le i \le d+1.
$$
Then for each $k \in \{1, \dots, d\}$, 
$$
(\del_i)_k \not\equiv (\del_j)_k \quad  (\textrm{mod} \ 1), \quad  1 \le i \neq j \le d+1.
$$
\end{lem}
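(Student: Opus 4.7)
The plan is to proceed by contradiction, making direct use of the characterization of adjacency provided by Theorem \ref{mainresult01}. Suppose the conclusion fails, so that there exist distinct indices $i, j \in \{1, \dots, d+1\}$ and a coordinate $k \in \{1, \dots, d\}$ with $(\del_i)_k - (\del_j)_k = N$ for some $N \in \Z$. I would then test the $n$-far condition of Definition \ref{20200411defn01} on this difference at the parameters $m = 0$ and $K = N$, which gives
$$\left|(\del_i)_k - (\del_j)_k - \frac{N}{n^{0}}\right| = 0.$$
No positive constant can lower-bound $0$, so $(\del_i)_k - (\del_j)_k$ fails to be $n$-far.

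This directly contradicts condition (1) of Theorem \ref{mainresult01}, which identifies $n$-far-ness of every pairwise difference $(\del_{\ell_1})_s - (\del_{\ell_2})_s$ as a necessary condition for $\calG_1, \dots, \calG_{d+1}$ to be adjacent. Since the hypothesis of the lemma is that these grids are adjacent, the contradiction closes the argument. An equivalent route would be via Theorem \ref{maincor}: if $(\del_i)_k$ and $(\del_j)_k$ agree modulo $1$, then the projections $P_k(\calG_i)$ and $P_k(\calG_j)$ are literally the same $n$-adic grid on $\R$, and a grid on $\R$ is never adjacent to itself in the sense of Theorem \ref{dim1thm}.

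I do not expect any real obstacle, since both reductions turn the statement into a one-line check. The only mild subtlety is that the $n$-far condition must be tested at \emph{every} $m \ge 0$ and every $K \in \Z$; it is easy to overlook the critical case $m = 0$ because one often thinks of $n$-far-ness purely in terms of base-$n$ digit patterns (as in Remark after Theorem \ref{dim1thm}), where the $m = 0$ case is hidden. If one wished to give a geometric proof that avoids invoking Theorem \ref{mainresult01} — natural in the geometric Part II where this lemma sits — one would instead fix a witness point $p$ whose $k$-th coordinate matches $(\del_i)_k \equiv (\del_j)_k \pmod 1$ and whose remaining $d-1$ coordinates are assigned from the remaining $d-1$ initial positions $\del_\ell$; then $p$ lies on the level-zero boundary of every one of the $d+1$ grids, so shrinking an open cube around $p$ to arbitrary size produces a witness to non-adjacency.
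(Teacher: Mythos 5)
Your proposal is correct, but your primary route differs from the paper's. You derive the lemma top--down from the necessity half of Theorem \ref{mainresult01}: an integer difference $(\del_i)_k-(\del_j)_k=N$ kills the $n$-far condition at $m=0$, $k=N$, contradicting condition (1). This is logically sound and non-circular, since Theorem \ref{mainresult01} is established in Part I and its proof does not invoke this lemma. The paper instead gives a self-contained geometric argument --- exactly the one you sketch in your closing sentences: take the witness point $p=\bigl(A,(\del_3)_2,\dots,(\del_{d+1})_d\bigr)$ with $A=(\del_i)_k=(\del_j)_k$ (after reducing mod $1$ to unit size), observe that $p$ lies in $\bigcap_{i=1}^{d+1}b\calG_{i,0}\subset\bigcap_{i=1}^{d+1}b\calG_{i,m}$ for all $m\ge 0$, and shrink a cube around $p$ so that any covering cube from any grid must come from a negative generation, violating Definition \ref{repdyadic}. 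What your shorter route buys is economy; what the paper's route buys is that the lemma then serves its intended expository role in Part II, motivating the definition of \emph{separated} vectors and the lattice structures from first principles rather than as a corollary of the main theorem. One small imprecision in your second alternative: if $(\del_i)_k\equiv(\del_j)_k\pmod 1$, the projections $P_k(\calG_i)$ and $P_k(\calG_j)$ are not necessarily ``literally the same grid'' --- they share all nonnegative generations but may differ at negative generations through their location functions; the correct conclusion is that their initial positions differ by an integer, which is not $n$-far, so they fail condition (1) of Theorem \ref{dim1thm} and hence are not adjacent, which still contradicts Theorem \ref{maincor}.
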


\begin{proof}
To start with, we may assume that all $\del_i$'s are of unit size. Otherwise, we consider $\del_i':=\del_i \quad (\textrm{mod} \ 1)$, where the modulus $1$ is taken over all the coordinate components. 

We prove it by contradiction. Without loss of generality, we may assume
$$
A:=(\del_1)_1=(\del_2)_1. 
$$
This implies that 
$$
\left\{ x \in \R^d: x_1=A \right\} \subset b\calG_{1, 0} \cap b\calG_{2, 0},
$$
where $b\calG_{1, 0}$ means the boundary of all cubes in the $0$-th generation of $\calG_1$. This further shows that for any $m \ge 0$,  the set
$$
\bigcap_{i=1}^{d+1} b\calG_{i, m} \quad \textrm{is not empty.}
$$
Indeed, it suffices to show that
\begin{equation} \label{20200219eq01}
\bigcap_{i=1}^{d+1} b\calG_{i, 0} \quad \textrm{is not empty.}
\end{equation}
since
$$
\bigcap_{i=1}^{d+1} b\calG_{i, 0} \subset \bigcap_{i=1}^{d+1} b\calG_{i, m}
$$
for all $m \ge 0$. While for a proof of \eqref{20200219eq01}, it suffices to note that the set
$$
\left\{x \in \R^d: x_1=A, x_2=(\del_3)_2, \dots, x_d=(\del_{d+1})_d \right\} \subset \bigcap_{i=1}^{d+1} b\calG_{i, 0}.
$$ 
To complete the proof, it suffices to follow Conde Alonso's argument. Namely, we pick a point $a \in \bigcap\limits_{i=1}^{d+1} b\calG_{i, 0}$ and we take a small cube centered at $a$. It is clear that the only way to cover this ball by using the cubes from $\bigcup\limits_{i=1}^{d+1} \calG_i$ is to use a cube whose sidelength is at least $n$. Shrinking the sidelength of the cube as small as we want, this gives a contradiction to the second property in Definition \ref{repdyadic}.
\end{proof}

Therefore, this easy lemma suggests the following definition. 

\begin{defn}
Given $\del_1, \dots, \del_\ell \in \R^d$, where $\ell \ge 1, \ell \in \Z$,  we say that they are \emph{separated} if 
$$
(\del_i)_k \neq (\del_j)_k \quad  (\textrm{mod} \ 1), 
$$
for all $k \in \{1, \dots, d\}$ and $1 \le i \neq j \le \ell$. 
\end{defn}

Now we are ready to introduce the so-called small scale lattices.

\begin{defn} \label{SSLat}
Let $m \ge 0$ and $\del_1, \dots, \del_d \in \R^d$ be separated. Then the \emph{small scale lattice associated to $\del_1, \dots, \del_d$ at level $m$} is defined by
$$
\frakL(\del_1, \dots, \del_d; m):=\bigcap_{i=1}^d b\left[\calG(\del_i)_m\right], 
$$
where $\calG(\del)_m$ is defined as in Definition \ref{repsgrids} and $b\left[\calG(\del)_m\right]$ refers to the union of all the boundary of the cubes in $\calG(\del)_m$.
\end{defn}

\begin{exa}
Let us give an example of such a structure (see, Figure \ref{Fig2}), in which, we consider the set $\calL\left( \left(\frac{1}{3}, \frac{1}{3}\right), \left(\frac{2}{3}, \frac{2}{3}\right) ; 1 \right)$ with $d=2$, $n=2$. More precisely, in Figure \ref{Fig2}, the red dashed lattice represents $b \left[\calG \left( \left(\frac{1}{3}, \frac{1}{3} \right) \right)_1 \right]$, the blue dashed lattice represents $b \left[\calG \left( \left(\frac{2}{3}, \frac{2}{3} \right) \right)_1 \right]$, and the 
collection of all green points stands for $\calL\left( \left(\frac{1}{3}, \frac{1}{3}\right), \left(\frac{2}{3}, \frac{2}{3}\right) ; 1 \right)$. 

\begin{center}
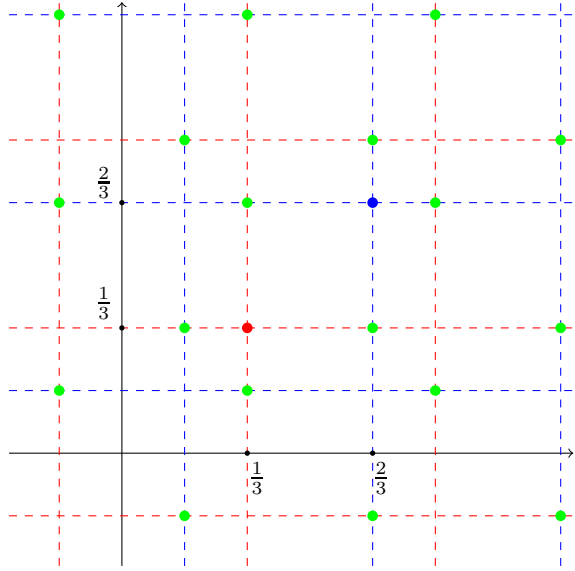
\begin{figure}[ht]
\begin{tikzpicture}[scale=5]
\draw (0, -.3) [->] -- (0,1.2);
\draw (-.3, 0) [->] -- (1.2,0);
\fill [opacity=1, red] (1/3, 1/3) circle [radius=.4pt];
\draw [opacity=1, red, dashed]  (1/3, -.3)--(1/3, 1.2);
\draw [opacity=1, red, dashed]  (-.3, 1/3)--(1.2, 1/3);
\draw [opacity=1, red, dashed]  (-.3, 5/6)--(1.2, 5/6);
\draw [opacity=1, red, dashed]  (5/6, -.3)--(5/6, 1.2);
\draw [opacity=1, red, dashed]  (-1/6, -.3)--(-1/6, 1.2);
\draw [opacity=1, red, dashed]  (-.3, -1/6)--(1.2, -1/6);
\fill (.36, 0)  node [below] {$\frac{1}{3}$};
\fill (0, .4)  node [left] {$\frac{1}{3}$};
\fill [opacity=1, blue] (2/3, 2/3) circle [radius=.4pt];
\draw [opacity=1, blue, dashed]  (2/3, -.3)--(2/3, 1.2);
\draw [opacity=1, blue, dashed]  (-.3, 2/3)--(1.2, 2/3);
\draw [opacity=1, blue, dashed]  (-.3, 7/6)--(1.2, 7/6);
\draw [opacity=1, blue, dashed]  (7/6, -.3)--(7/6, 1.2);
\draw [opacity=1, blue, dashed]  (1/6, -.3)--(1/6, 1.2);
\draw [opacity=1, blue, dashed]  (-.3, 1/6)--(1.2, 1/6);
\fill (.69, 0) node [below] {$\frac{2}{3}$};
\fill (0, .72) node [left] {$\frac{2}{3}$}; 
\fill [opacity=1, green] (1/3, 2/3) circle [radius=.4pt];
\fill [opacity=1, green] (2/3, 1/3) circle [radius=.4pt];
\fill [opacity=1, green] (7/6, 1/3) circle [radius=.4pt];
\fill [opacity=1, green] (5/6, 2/3) circle [radius=.4pt];
\fill [opacity=1, green] (5/6, 7/6) circle [radius=.4pt];
\fill [opacity=1, green] (1/3, 7/6) circle [radius=.4pt];
\fill [opacity=1, green] (1/3, 1/6) circle [radius=.4pt];
\fill [opacity=1, green] (5/6, 1/6) circle [radius=.4pt];
\fill [opacity=1, green] (2/3, -1/6) circle [radius=.4pt];
\fill [opacity=1, green] (7/6, -1/6) circle [radius=.4pt];
\fill [opacity=1, green] (7/6, 5/6) circle [radius=.4pt];
\fill [opacity=1, green] (2/3, 5/6) circle [radius=.4pt];
\fill [opacity=1, green] (1/6, 5/6) circle [radius=.4pt];
\fill [opacity=1, green] (1/6, 1/3) circle [radius=.4pt];
\fill [opacity=1, green] (1/6, -1/6) circle [radius=.4pt];
\fill [opacity=1, green] (-1/6, 1/6) circle [radius=.4pt];
\fill [opacity=1, green] (-1/6, 2/3) circle [radius=.4pt];
\fill [opacity=1, green] (-1/6, 7/6) circle [radius=.4pt];
\fill [opacity=1] (1/3, 0) circle [radius=.2pt];
\fill [opacity=1] (2/3, 0) circle [radius=.2pt];
\fill [opacity=1] (0, 2/3) circle [radius=.2pt];
\fill [opacity=1] (0, 1/3) circle [radius=.2pt];
\end{tikzpicture}
\caption{$\calL\left( \left(\frac{1}{3}, \frac{1}{3}\right), \left(\frac{2}{3}, \frac{2}{3}\right) ; 1 \right)$}
\label{Fig2}
\end{figure}
\end{center}
\end{exa}

\begin{rem}
\begin{enumerate}
    \item [(1).] We call it ``small scale", in the sense that set $\frakL(\del_1, \dots, \del_d; m)$ is constructed with respect to the boundary of cubes of small sidelength;
    \item [(2).] Since $\del_1, \dots, \del_d$ are separated, $\frakL(\del_1, \dots, \del_d; m)$ is countable (it contains no lines);
    \item [(3).] $\frakL(\del_1, \dots, \del_d; m) \subset \frakL(\del_1, \dots, \del_d; m')$ for all $m' \ge m$.
\end{enumerate}
\end{rem}

There is an equivalent way to define $\calL(\del_1, \dots, \del_d; m)$.

\begin{lem} \label{equivsmallscalelattice}
Let $\del_1, \dots, \del_d$ be separated. For each $\sigma \in S_d$, where $S_d$ refers to the finite symmetric group over $\{1, \dots, d\}$, denote
$$
\del_\sigma:=\left( \left(\del_{\sigma(1)} \right)_1, \dots,  \left(\del_{\sigma(d)} \right)_d \right) \in \R^d.
$$
Then for each $m \ge 0$, there holds
$$
\calL(\del_1, \dots, \del_d; m)=\bigcup_{\sigma \in S_d} \calA(\del_\sigma)_m.
$$
\end{lem}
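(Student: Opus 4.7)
The plan is to rewrite each boundary set $b[\calG(\del_i)_m]$ as a union of hyperplane families, one per coordinate direction, and then distribute the intersection over the union. Since a point lies on the boundary of some cube in $\calG(\del_i)_m$ precisely when at least one of its coordinates lies on an $n^{-m}\Z$-translate of $(\del_i)$, I would write
\begin{equation*}
b[\calG(\del_i)_m] \;=\; \bigcup_{k=1}^d H_i^k, \qquad H_i^k := \{y \in \R^d : (y)_k \in (\del_i)_k + n^{-m}\Z\}.
\end{equation*}
Distributing then yields
\begin{equation*}
\calL(\del_1,\dots,\del_d;m) \;=\; \bigcap_{i=1}^d \bigcup_{k=1}^d H_i^k \;=\; \bigcup_{f:[d]\to[d]} \; \bigcap_{i=1}^d H_i^{f(i)},
\end{equation*}
the outer union ranging over all functions $f$ from $\{1,\dots,d\}$ to itself.

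The first substantive step is to compute the contribution of a permutation $f=\sigma \in S_d$. Since $\sigma$ hits each coordinate slot exactly once, writing $s=\sigma(i)$ decouples the constraints into one independent condition per coordinate, giving
\begin{equation*}
\bigcap_{i=1}^d H_i^{\sigma(i)} \;=\; \{\,y\in\R^d : (y)_s \in (\del_{\sigma^{-1}(s)})_s + n^{-m}\Z \text{ for all } s \in \{1,\dots,d\}\,\} \;=\; \calA(\del_{\sigma^{-1}})_m,
\end{equation*}
by direct comparison with Definition \ref{repsgrids}. Since inversion is a bijection on $S_d$, taking the union of these contributions over all $\sigma \in S_d$ produces exactly $\bigcup_{\tau \in S_d}\calA(\del_\tau)_m$, the right-hand side of the lemma.

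The main obstacle, and the place where separation enters essentially, is showing that every non-bijective $f$ contributes the empty set and can therefore be discarded. If $f(i)=f(j)=k$ for some $i\ne j$, then $\bigcap_{i'} H_{i'}^{f(i')} \subseteq H_i^k \cap H_j^k$, and the latter is nonempty only when $(\del_i)_k - (\del_j)_k \in n^{-m}\Z$. This is where the separation hypothesis must be invoked in its strong (and intended) form: the pairwise coordinate differences are not $n$-adic rationals, which is automatic in the applications of this lemma since the $\del_i$ will be initial positions of adjacent $n$-adic systems and hence satisfy the $n$-far condition of Theorem \ref{mainresult01}(1). Under this condition every non-permutation $f$ drops out of the union, the two expressions coincide, and the lemma follows.
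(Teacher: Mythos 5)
Your proof is correct and is in fact more complete than the paper's. The paper only verifies the inclusion $\supseteq$: it observes that $\del_\sigma$ lies on $b[\calG(\del_i)_m]$ for every $i$ (take $s=\sigma^{-1}(i)$, so that $(\del_\sigma)_s=(\del_i)_s$), hence so does the whole translate $\calA(\del_\sigma)_m$ by $n^{-m}\Z^d$-periodicity; the reverse inclusion is relegated to a figure. Your distributive decomposition $\bigcap_i\bigcup_k H_i^k=\bigcup_f\bigcap_i H_i^{f(i)}$ handles both inclusions simultaneously and, more importantly, isolates exactly where the hypothesis enters. Your concern about separation is well founded and points to a genuine imprecision in the paper: under the stated definition of ``separated'' (namely $(\del_i)_k\not\equiv(\del_j)_k$ mod $1$) the lemma as written is false. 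For instance, with $n=d=2$, $m=1$, $\del_1=(0,0)$ and $\del_2=\left(\tfrac12,\tfrac12\right)$ are separated, yet $b[\calG(\del_1)_1]=b[\calG(\del_2)_1]$, so the intersection contains entire lines and cannot equal the countable right-hand side (this same example contradicts the remark that $\frakL(\del_1,\dots,\del_d;m)$ contains no lines). As you say, a non-injective $f$ misses some coordinate slot, so its contribution, if nonempty, contains a full line and must be ruled out; this requires precisely your strengthened condition $(\del_i)_k-(\del_j)_k\notin n^{-m}\Z$, which does hold in the intended applications where these differences are $n$-far.
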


\begin{proof}
The desired result follows from the observation that $\del_\sigma \in \calL(\del_1, \dots, \del_d; m)$ and hence $\calA(\del_\sigma)_m \subset \calL(\del_1, \dots, \del_d; m)$. One may consult Figure \ref{Fig2} for an example of such a fact.
\end{proof}

Let us denote
$$
\frakL(\del_1, \dots, \del_d):=\bigcup_{m \ge 0} \frakL(\del_1, \dots, \del_d; m). 
$$
be the union of all small scale lattices.

We are ready to generalize the concept of far number in dimension one case.

\begin{defn}
\label{far number generalization}
Given $\del_1, \dots, \del_d \in \R^d$ being separated, we say $\del \in \R^d$ is a \emph{$n$-far vector} with respect to $\frakL(\del_1, \dots, \del_d)$, if there exists some $C>0$, such that for every $m \ge 0$, there holds
\begin{equation} \label{20200319eq02}
\textrm{dist} \left(b\left[\calG(\del)_m\right],  \frakL(\del_1, \dots, \del_d; m) \right) \ge \frac{C}{n^m}.
\end{equation}
\end{defn}

\begin{rem} \label{20200320rem01}

While the above definition looks quite different from Definition 1.3 of \cite{AHJOW}, we can actually motivate our definition from \cite{AHJOW}. In the one-dimensional case, the set $\{\del_1, \dots, \del_d\}$ reduces to the set $\{0\}$. Therefore, $n$-far in the one dimensional case reduced to checking that
\begin{equation} \label{20200319eq01}
\left|\del-\frac{k}{n^m} \right| \ge \frac{C}{n^m},
\end{equation}
for $k \in \Z$ and $m \ge 0$ (see Remark 1.4 (2) in \cite{AHJOW}). Due to such a simple structure, Theorem 2.8 in \cite{AHJOW} asserted that $\del \in \R$ is $n$-far if and only if the maximal length of tie, that is, the maximal length of consecutive $0$'s or consecutive $n-1$'s in the base $n$-representation of $\del$, is finite. 

The key point to generalize the concept of $n$-far number in higher dimension is to realize that $\del$ and $\frac{k}{n^m}$ are indeed playing different roles in condition \eqref{20200319eq01}. More precisely, rewrite \eqref{20200319eq01} as
$$
\left| \left(\del-\frac{k_1}{n^m} \right)-\frac{k_2}{n^m} \right| \ge \frac{C}{n^m}
$$
for $k_1, k_2 \in \Z$ and $m \ge 0$. Here are the key observations: let $m \ge 0$ be fixed, then 
\begin{enumerate}
    \item [(1).] when $k_1$ varies, the set
    $$
    \left\{ \del-\frac{k_1}{n^m}: k_1 \in \Z \right\}
    $$
    represents the all the boundary points $b\left[\calG(\del)_m \right]$;
    
    \medskip
    
    \item [(2).] when $k_2$ varies, the set
    $$
    \left\{\frac{k_2}{n^m}: k_2 \in \Z \right\}
    $$
    represents the small scale lattice $\calL(0; m)$. This is clear from Lemma \ref{equivsmallscalelattice}. 
\end{enumerate}

Hence, an equivalent way to state \eqref{20200319eq01} is the following: for any $m \ge 0$, 
$$
\textrm{dist} \left( b\left[ \calG(\del)_m \right], \calL(0; m) \right) \ge \frac{C}{n^m},
$$
which is precisely \eqref{20200319eq02}.  The key here is that the boundary points and the small scale lattice align in dimension 1.
\end{rem}

\begin{exa}
 We now illustrate via an example why the idea of generalizing the concept of far numbers to pairwise distances is not sufficient to show that $d+1$ grids are adjacent.  Take $\del_1 = (0,0), \del_2 = \left(0,\frac{1}{3} \right)$ and $\del_3 = \left(\frac{1}{3},\frac{1}{3} \right)$.  Consider the natural pairwise generalization of far where $\del$ and $\del'$ are far if $(\del)_i$ is far from $(\del')_i$ for some $i$.  Using this definition, we see that each of these points is far from each other, but $\del_1, \del_2, \del_3$ do no form an adjacent $n$-adic system.  In fact, if one looks at $\mathcal{L}(\del_1, \del_2;0)$, one does not get a lattice, but a set of vertical lines (see Figure \ref{Fig8}).

 Therefore this pairwise comparison between points is not enough to determine an adjacent $n$-adic system and the lattice structure is needed.
\end{exa}

\begin{center}
\begin{figure}[ht]
\begin{tikzpicture}[scale=5.5]
\draw (0, -.3) [->] -- (0, .65);
\draw (-.3, 0) [->] -- (.65,0);
\draw [opacity=1, red, dashed]  (1/3, -.3)--(1/3, .6);
\draw [opacity=1, red, dashed]  (-.3, 1/3)--(.6, 1/3);
\draw [opacity=1, red, dashed]  (-1/6, -.3)--(-1/6, .6);
\draw [opacity=1, red, dashed]  (-.3, -1/6)--(.6, -1/6);
\fill (.286, 1/3) node [below] {$\del_3$};
\draw [opacity=1, dashed]  (.5, -.3)--(1/2, .6);
\draw [opacity=1, dashed]  (-.3, 1/2)--(.6, 1/2);
\fill (-.04, 0) node [below] {$\del_1$};
\draw [opacity=1, blue, dashed]  (1/2, -.3)--(1/2, .6);
\draw [opacity=1, blue, dashed]  (-.3, 1/3)--(.6, 1/3);
\draw [opacity=1, blue, dashed]  (0, -.3)--(0, .6);
\draw [opacity=1, blue, dashed]  (-.3, -1/6)--(.6, -1/6);
\fill (-.04, 1/3) node [below] {$\del_2$};
\draw [opacity=2, line width=0.4mm, green] (0, -.3)--(0, .6);
\draw [opacity=2, line width=0.4mm, green] (.5, -.3)--(.5, .6);
\fill [opacity=1, red] (1/3, 1/3) circle [radius=.4pt];
\fill [opacity=1] (0, 0) circle [radius=.4pt];
\fill [opacity=1, blue] (0, 1/3) circle [radius=.4pt];
\end{tikzpicture}
\caption{A non-example of small scale lattice: $b[\calG(\del_1)]_0$ (black part),  $b[\calG(\del_2)]_0$ (blue part), $b[\calG(\del_3)]_0$ (red part) and the non-example $b\left[\calG(\del_1)\right]_0 \cap b\left[\calG(\del_2)\right]_0$ (green part). }
\label{Fig8}
\end{figure}
\end{center}


\subsection{Large scale sampling, large scale lattice and modulated corner sets}

The purpose of the last part of section is to introduce all the basic structures with respect to $d$ separated vectors and $d$ location functions, which is used to study the large scale case. Informally, these structures are the counterparts of the small scale lattice (see, Definition \ref{SSLat}) and the usual corner sets (see, Definition \ref{cornerset}), however, one new phenomenon for the large scale is that one needs to use the cube $[0, n^j)^d$ to quantify the behavior of the location function (see, Theorem \ref{mainresult}, (ii)), and this suggests that we need a localized version of those structures in the large scale case.

The setting is as follows. Let $\del_1, \dots, \del_d \in \R^d$ be separated, $\bfVa_1, \dots, \bfVa_d$ be the $d$ many infinite matrices defined in \eqref{20200226eq02}, and $\calL_{\bfVa_1}, \dots, \calL_{\bfVa_d}$ be the corresponding location functions.

\begin{defn} \label{largessampling}
For each $j>0$, the \emph{large scale sampling associated to the tuples
$$
\left(\del_1, \bfVa_1\right), \dots, \left(\del_d, \bfVa_d\right)
$$
at level $j$} is defined by
$$
\calS_{\bfVa_1, \dots, \bfVa_d}^{\del_1, \dots, \del_d}(j):=\left( \bigcap_{k=1}^d b\left[ \calG(\del_k, \calL_{\bfVa_k})_{-j}\right]\right) \cap [0, n^j)^d.
$$
\end{defn}

\begin{exa}
Let $d=2, n=2$, $\del_1 = \left(\frac{1}{3}, \frac{1}{3} \right)$ and $\del_2 = \left(\frac{2}{3}, \frac{2}{3} \right)$.  We illustrate below the large scale sampling (see, Figure \ref{Fig9}) for $j=1$ for $\left(\del_1, \bfVa_1 \right)$ and $\left(\del_2, \bfVa_2 \right)$ (in the cube $[0,2)^2)$) in Figure \ref{Fig9}, where
$$
\bfVa_1:= \begin{pmatrix}
1 & 0 & 1 & 0 & \dots\\
1 & 0 & 1 & 0 & \dots
\end{pmatrix}
$$
and 
$$
\bfVa_2:=\begin{pmatrix}
0 & 1 & 0 & 1 & \dots\\
0 & 1 & 0 & 1 & \dots
\end{pmatrix}
$$
\end{exa}

\begin{center}
\begin{figure}[ht]
\begin{tikzpicture}[scale=2]
\draw (0, -.3) [->] -- (0, 2.2);
\draw (-.3, 0) [->] -- (2.2,0);
\fill [opacity=1, red] (4/3, 4/3) circle [radius=1pt];
\fill [opacity=1, blue] (2/3, 2/3) circle [radius=1pt];
\draw [opacity=1, dashed]  (0, 2)--(2, 2)--(2, 0);
\fill (2, 0) node [below] {$2$};
\fill (0, 2) node [left]  {$2$};
\draw [opacity=1, dashed, red] (-.3, 4/3)--(2.2, 4/3);
\draw [opacity=1, dashed,  red] (4/3, -.3)--(4/3, 2.2); 
\draw [opacity=1, dashed, blue] (2/3, -.3)--(2/3, 2.2);
\draw [opacity=1, dashed, blue] (-.3, 2/3)--(2.2, 2/3); 
\fill (1.44, 4/3) node [above] {$A$};
\fill (.55, 2/3) node [below] {$B$};
\fill [opacity=1, green] (2/3, 4/3) circle [radius=1pt];
\fill [opacity=1, green] (4/3, 2/3) circle [radius=1pt];
\fill (1.43, 0) node [below] {$\frac{4}{3}$};
\fill (.75, 0) node [below] {$\frac{2}{3}$};
\fill (0, .5) node [left] {$\frac{2}{3}$};
\fill (0, 1.17) node [left] {$\frac{4}{3}$};
\end{tikzpicture}
\caption{An example of large scale sampling: $A=\del_1+\calL_{
\bfVa_1}(1)=\left(\frac{4}{3}, \frac{4}{3} \right)$, $B=\del_2+\calL_{\bfVa_2}(1)=\left(\frac{2}{3}, \frac{2}{3} \right)$, $b\left[\calG(\del_1, \bfVa_1) \right]_{-1}$ (red part), $b\left[\calG(\del_2, \bfVa_2) \right]_{-1}$ (blue part), and the large scale sampling $\calS_{\bfVa_1, \bfVa_2}^{\del_1, \del_2}(1)$ (green part). 
}
\label{Fig9}
\end{figure}
\end{center}

Let us extend the definition of small scale lattice to the large scales, which can be viewed as a ``global" version of the large scale sampling.
 
\begin{defn} \label{largessc}
For each $m<0$, the \emph{large scale lattice associated to the tuples
$$
\left(\del_1, \bfVa_1\right), \dots, \left(\del_d, \bfVa_d\right)
$$
at level $m$} is defined by
$$
\calL_{\bfVa_1, \dots, \bfVa_d}^{\del_1, \dots, \del_d}(m):=\bigcap_{k=1}^d b\left[ \calG(\del_k, \calL_{\bfVa_k})_m\right].
$$
\end{defn}

\begin{rem}
Again, we emphasize the use of $j=-m$ where $m<0$ and $j$ refers to the $j$-th ancestor (that is, the $-j$-th generation in Definition \ref{repsgrids}). We only use this identification for the large cubes, the reason we do so is to be consistent with the terminology in \cite{AHJOW}.
\end{rem}

\begin{rem}
\begin{enumerate}
    \item [1.] Note that since $\del_1, \dots, \del_d$ are assumed to be separated, $\calS_{\bfVa_1, \dots, \bfVa_d}^{\del_1, \dots, \del_d}(j)$ is a finite set, while $\calL_{\bfVa_1, \dots, \bfVa_d}^{\del_1, \dots, \del_d}(m)$ is countable. 
    \item [2.] The reason for us to start with a negative $m$ in the definition of large scale lattice is to keep the consistency with the definition of small scale lattice.
    \item [2.] The large scale sampling can be viewed as a ``generator" of the large scale lattice, more precisely, we have, for each $m>0$, there holds
    $$
    \calL_{\bfVa_1, \dots, \bfVa_d}^{\del_1, \dots, \del_d}(m)=\bigcup_{p \in \Z^d} \left(\calS_{\bfVa_1, \dots, \bfVa_d}^{\del_1, \dots, \del_d}(-m)+n^j p\right).
    $$
\end{enumerate}
\end{rem}

We need one more definition to state our main result in large scale.

\begin{defn} \label{defncorner}
Let $j \ge 0$, $\del \in \R^d$, $\bfVa$ be an infinite matrix defined in \eqref{20200226eq02} and $\calL_{\bfVa}$ be the associated location function. Then the \emph{$j$-th modulated corner set associated to $(\del, \bfVa)$} is defined to be 
$$
(m\calC)_{\bfVa}^{\del}(j):=\left( b \left[\calG(\del, \calL_\bfVa) \right]_{-j}\right) \cap [0, n^j)^d.
$$ 
\end{defn}

\begin{exa}
We give an easy example of such a structure (see, Figure \ref{Fig3}). Here we take our favorite example, that is, we consider the case when $d=2$, $n=2, j=1$,  $\del=\left(\frac{1}{3}, \frac{1}{3} \right)$, $\bfVa:= \begin{pmatrix}
1 & 0 & 1 & 0 & \dots\\
1 & 0 & 1 & 0 & \dots
\end{pmatrix}$.

\begin{center}
\begin{figure}[ht]
\begin{tikzpicture}[scale=2]
\draw (0, -.1) [->] -- (0, 2.2);
\draw (-.1, 0) [->] -- (2.2,0);
\draw [dashed] (2, 0)--(2, 2)--(0, 2); 
\fill [opacity=1, red] (4/3, 4/3) circle [radius=1pt];
\draw [line width=0.25mm, red ] (4/3, 0)--(4/3, 2);
\draw [line width=0.25mm, red ] (0, 4/3)--(2, 4/3); 
\fill (1.6, 1.4) node [above] {$\left(\frac{4}{3}, \frac{4}{3} \right)$};
\fill (2, 0) node [below] {$2$};
\fill (0, 2) node [left]  {$2$}; 
\end{tikzpicture}
\caption{$(m\calC)_{\bfVa}^{\left(\frac{1}{3}, \frac{1}{3} \right)}(1)$.}
\label{Fig3}
\end{figure}
\end{center}
\end{exa}

\begin{rem} \label{20200325rem01}
\begin{enumerate}
    \item [1.]  An trivial observation would be for any $y' \in \calS_{\bfVa_1, \dots, \bfVa_d}^{\del_1, \dots, \del_d}(j)$, there holds
$$
0 \le \frac{\textrm{dev} \left( (m \calC)_\bfVa^{\del}(j), y' \right)}{n^j}<1.  
$$
\item [2.] The corner sets and modulated corner sets are closely related. More precisely, we have for any $d \ge 1$ and $j \ge 0$, there holds
$$
\left[\calC_{\bfVa}^{\del}(-j) \right] \left(\del+\calL_{\bfVa}(j) \quad (\textrm{mod} \ n^j) \right) \equiv (m\calC)_{\bfVa}^{\del}(j) \quad (\textrm{mod} \ n^j)
$$

\end{enumerate}

\end{rem}

In our later application, the modulated corner sets will only be applied to study the ancestors of the $0$-th generation, while the corner sets defined early will take care of all the generations.  More precisely, the modulated corners are important to take care of points near the boundary of the cube $[0, n^j)^d$ that might be close to a corner lying just outside $[0,n^j)^d$ but far from the modulated corner. This remark will be made clearer in the next section and indeed underlies our use of modulated corners.

\section{An alternative approach}

Let us go back to the main question that we are interested in, namely, \emph{what the necessary and sufficient conditions are, so that a given collection of $d+1$ $n$-adic grids are adjacent?}

The goal of this section is to provide an alternative way to answer the above question via the fundamental structures, and to comment about the uniformity of such a representation. To begin, let us write these $d+1$ $n$-adic grids by their representations, namely $\calG(\del_1, \calL_{\bfVa_1}), \dots, \calG(\del_{d+1}, \calL_{\bfVa_{d+1}})$. Moreover, using Lemma \ref{20200219lem01}, we may assume $\del_1, \dots, \del_{d+1}$ are separated. 

\medskip
\subsection{Motivation and the structure theorem of adjacency}

As mentioned in the introduction, the intuition behind the next theorem originates back to the one dimensional result (see, Theorem \ref{dim1thm}). The main idea to generalize the first condition is already contained in Remark \ref{20200320rem01}. More precisely, we can rewrite condition (1) as: there exists some absolute constant $C>0$, such that for any $k_1, k_2 \in \Z$ and $m \ge 0$, it holds that
$$
\left| \left( \del_1-\frac{k_1}{n^m} \right)-\left( \del_2-\frac{k_2}{n^m} \right) \right| \ge \frac{C}{n^m}.
$$
For simplicity, we may assume both $\del_1, \del_2 \in [0, 1)$, that is, both of them are of unit size (although we do not require such a restriction in our main result). As in Remark \ref{20200320rem01}, there are two different ways to interpret the sets
$$
\left\{\del_i-\frac{k}{n^m}: k_i \in \Z \right\}, \quad i=1, 2. 
$$
The first way is to interpret each set as the set of all the boundary points $b\left[ \calG(\del_i)_m\right]$; while the second way is to treat it as the small scale lattice $\frakL(\del_i; m)$. Therefore, we can restate the first condition as 
\begin{center}
    \emph{$\del_1$ is $n$-far with respect to $\frakL(\del_2)$ and $\del_2$ is $n$-far with respect to $\frakL(\del_1)$}.
\end{center}

\medskip

While for condition (2), note that it is equivalent to the following: 
\begin{equation} \label{20200326eq11}
0<\liminf_{j \to \infty} \left| \frac{\del_1+\calL_{\bfVa_1}(j)-\del_2-\calL_{\bfVa_2}(j)}{n^j} \right| \le \limsup_{j \to \infty} \left| \frac{\del_1+\calL_{\bfVa_1}(j)-\del_2-\calL_{\bfVa_2}(j)}{n^j} \right|<1. 
\end{equation}
Similarly, there are two ways to interpret the sets (which contain only one element in the one dimensional case):
\begin{equation} \label{20200326eq10}
\left\{ \del_i+\calL_{\bfVa_i}(j) \right\}, \quad i=1, 2,
\end{equation}
for any $j \ge 1$. The first way is to regard each as a $j$-th modulated corner set, namely, $(m\calC)_{\bfVa_i}^{\del_i}(j)$; while the second way is to consider it as a large scale sampling at level $j$, that is, $\calS_{\bfVa_i}^{\del_i}(j)$.  If one sets the interpretation of the first set as the modulated corner and of the second set as the large scale sampling, then additionally there are also two different ways to understand the term
\begin{equation} \label{20200321eq01}
\left| \del_1+\calL_{\bfVa_1}(j)-\del_2-\calL_{\bfVa_2}(j) \right|.
\end{equation}
The first way is to treat it as
$$
\textrm{dist} \left( (m\calC)_{\bfVa_i}^{\del_i}(j), \calS_{\bfVa_{i'}}^{\del_{i'}}(j) \right),
$$
where $\{i, i'\}=\{1, 2\}$; while the second way is to regard it as the ``maximum distance" between these two sets, that is
$$
\max_{y' \in \calS_{\bfVa_{i'}}^{\del_{i'}}(j)} \textrm{dev} \left( (m\calC)_{\bfVa_i}^{\del_i}(j), y' \right). 
$$
Note that all these complicated structures (that is, the modulated corner set and large scale sampling) collapse into a single point in one dimensional case, and all those quantities coincide with each other and take the value \eqref{20200321eq01}, however, in higher dimension, we need to treat them differently. 

Here is the main result for the second part, which can be thought as some \emph{structure theorem of adjacency} with respect to a collection of $d+1$ $n$-adic grids. 

\begin{thm} \label{mainresult}
The collection of $n$-adic grids $\calG(\del_1, \calL_{\bfVa_1}), \dots, \calG(\del_{d+1}, \calL_{\bfVa_{d+1}})$ are adjacent if and only if 
\begin{enumerate}
    \item [(i).] For each $i \in \{1, \dots, d+1\}$, $\del_i $ is $n$-far with respect to $\frakL\left(\del_1, \dots, \widehat{\del_i}, \dots, \del_{d+1}\right)$. Here, $\widehat{\del_i}$ means that in the sequence $\{\del_1, \dots, \del_{d+1}\}$, we remove the term $\del_i$; 
    \item [(ii).] For each $i \in \{1, \dots, d+1\}$, there holds
\begin{eqnarray} \label{20200326eq12}
&& 0<\liminf_{j \to \infty} \frac{\textrm{dist} \left( (m\calC)_{\bfVa_i}^{\del_i} (j), \calS_{\bfVa_1,  \dots, \widehat{\bfVa_i}, \dots, \bfVa_{d+1}}^{\del_1, \dots, \widehat{\del_i}, \dots, \del_{d+1}}(j) \right)}{n^j} \nonumber \\ 
&&\quad \quad \quad \quad \quad  \quad \quad \quad \quad  \le \limsup_{j \to \infty} \frac{\max_{y' \in \calS_{\bfVa_1,  \dots, \widehat{\bfVa_i}, \dots, \bfVa_{d+1}}^{\del_1, \dots, \widehat{\del_i}, \dots, \del_{d+1}}(j)} \textrm{dev}\left( (m\calC)_{\bfVa_i}^{\del_i} (j), y' \right)}{n^j} 
<1. 
\end{eqnarray}
\end{enumerate}
Note that in the above statement, for each $j>0$ and $i \in \{1. \dots, d+1\}$, 
$$ 
\calS_{\bfVa_1,  \dots, \widehat{\bfVa_i}, \dots, \bfVa_{d+1}}^{\del_1, \dots, \widehat{\del_i}, \dots, \del_{d+1}}(j)
$$
is a finite set, and hence the maximum value in the limit superior can be attained. 
\end{thm}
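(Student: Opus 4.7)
The plan is to derive Theorem \ref{mainresult} from the already-proven Theorem \ref{mainresult01} by showing that condition (i) is equivalent to condition (1) of Theorem \ref{mainresult01} and that condition (ii) is equivalent to condition (2) of Theorem \ref{mainresult01}. This is exactly the geometric content that motivated the definitions in Section 7.

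For the equivalence (i) $\Leftrightarrow$ (1), fix $i \in \{1,\dots,d+1\}$ and apply Lemma \ref{equivsmallscalelattice} to the $d$ separated vectors $\del_1,\dots,\widehat{\del_i},\dots,\del_{d+1}$. This writes $\frakL(\del_1,\dots,\widehat{\del_i},\dots,\del_{d+1};m)$ as a union of grids $\calA(\del_\sigma)_m$ indexed by bijections $\sigma\colon\{1,\dots,d\}\to\{1,\dots,d+1\}\setminus\{i\}$. Since $b[\calG(\del_i)_m]$ is the union of axis-aligned hyperplanes at positions $(\del_i)_s+k/n^m$, the distance from a lattice point with $s$-th coordinate $(\del_{\sigma(s)})_s+k_s/n^m$ to this set equals $\min_s\inf_{k\in\Z}|(\del_{\sigma(s)})_s-(\del_i)_s-k/n^m|$. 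Optimizing over the lattice point (with $\sigma(s)$ ranging over $\{1,\dots,d+1\}\setminus\{i\}$) one obtains
$$
\textrm{dist}\bigl(b[\calG(\del_i)_m],\,\frakL(\dots;m)\bigr)=\min_{s,\,j\neq i}\inf_{k\in\Z}\Bigl|(\del_j)_s-(\del_i)_s-\tfrac{k}{n^m}\Bigr|.
$$
Thus $\del_i$ is $n$-far relative to the lattice in the sense of Definition \ref{far number generalization} iff $(\del_j)_s-(\del_i)_s$ is $n$-far in the sense of Definition \ref{20200411defn01} for every $j\neq i$ and every $s$. Letting $i$ range over $\{1,\dots,d+1\}$ recovers condition (1) of Theorem \ref{mainresult01}.

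For the equivalence (ii) $\Leftrightarrow$ (2), note that $(m\calC)_{\bfVa_i}^{\del_i}(j)$ restricted to $[0,n^j)^d$ is the union of $d$ coordinate hyperplane-sections meeting at the single point $p_i(j):=(\del_i+\calL_{\bfVa_i}(j))\bmod n^j$, while an analog of Lemma \ref{equivsmallscalelattice} for the large scale sampling identifies the points of $\calS_{\bfVa_1,\dots,\widehat{\bfVa_i},\dots,\bfVa_{d+1}}^{\del_1,\dots,\widehat{\del_i},\dots,\del_{d+1}}(j)$ as those whose $s$-th coordinate is $(p_k(j))_s$ for some $k\neq i$, with every index combination realizable by a permutation. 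A direct computation then yields
\begin{align*}
\textrm{dist}\bigl((m\calC)_{\bfVa_i}^{\del_i}(j),\,\calS(j)\bigr)&=\min_{s,\,k\neq i}|(p_i(j))_s-(p_k(j))_s|,\\
\max_{y'\in\calS(j)}\textrm{dev}\bigl((m\calC)_{\bfVa_i}^{\del_i}(j),\,y'\bigr)&=\max_{s,\,k\neq i}|(p_i(j))_s-(p_k(j))_s|.
\end{align*}
Writing $L_{ik,s}(j):=(\calL_{\bfVa_i}(j)-\calL_{\bfVa_k}(j))_s\in(-n^j,n^j)$ and using that $|(\del)_s|$ is bounded independently of $j$, for large $j$ one has
$$
\frac{|(p_i(j))_s-(p_k(j))_s|}{n^j}=\frac{|L_{ik,s}(j)|}{n^j}+O(n^{-j})\quad\textrm{or}\quad 1-\frac{|L_{ik,s}(j)|}{n^j}+O(n^{-j}),
$$
depending on whether the $\bmod n^j$ operation wraps. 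In either branch, the bounds $0<\liminf\le\limsup<1$ on $|(p_i(j))_s-(p_k(j))_s|/n^j$ hold if and only if they hold for $|L_{ik,s}(j)|/n^j$, which is condition (2) of Theorem \ref{mainresult01}. Varying $i$ completes the equivalence.

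The main obstacle is the wrap-around analysis in the second equivalence: taking coordinates modulo $n^j$ can distort distances unpredictably, and a one-sided bound on $|L_{ik,s}(j)|$ alone is not preserved under it. The structural design of condition (ii) --- pairing a minimum (dist) with a maximum (dev) --- is precisely what guarantees that the bound $|L_{ik,s}(j)|/n^j\in(\epsilon,1-\epsilon)$ is \emph{symmetric} with respect to the wrap-around alternative, so both branches yield the same conclusion. Once the large scale analog of Lemma \ref{equivsmallscalelattice} is established (a routine adaptation, since the separation assumption for the $\del_k$'s transfers to the shifted corners $p_k(j)$), the remainder is elementary coordinate-wise algebra.
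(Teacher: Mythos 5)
Your proposal is correct and follows essentially the same route as the paper: the paper's own proof of Theorem \ref{mainresult} consists precisely of the assertion that conditions (i) and (ii) are equivalent to conditions (1) and (2) of Theorem \ref{mainresult01}, with the details left to the reader. You have simply supplied those details (via Lemma \ref{equivsmallscalelattice}, its large-scale analogue, and the wrap-around symmetry that underlies Remark \ref{20200410rem01}), and they check out.
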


\begin{rem} \label{20200410rem01}
Note that the above result is independent of the choice of the representations of $n$-grid and hence contains the uniformness result Theorem \ref{uniform} as a particular case. The reason is that the structures that we are working with, that is, the collection of all the boundary points at certain generations, the small scale lattices, the modulated corner sets and the large scale sampling are independent of the choice of the representations; while for the term 
$$
\del+\calL_{\bfVa}(j)
$$
(see, \eqref{20200326eq10} and \eqref{20200326eq11}) that we used in our one dimensional result indeed depends on the particular choices of the representation, more precisely, it is possible for us to find two equivalent presentations $\calG(\del, \calL_{\bfVa})$ and $\calG(\del', \calL_{\bfVa'})$ such that
$$
\del+\calL_{\bfVa}(j)<n^j \le \del'+\calL_{\bfVa'}(j)
$$
for $j$ sufficiently large. In such a situation, Theorem \ref{mainresult} suggests us to work with the point $\del'+\calL_{\bfVa'}(j)-n^j$ instead of $\del'+\calL_{\bfVa'}(j)$, and this will guarantee that there is only one limit infimum and one limit superior in \eqref{20200326eq12}, instead many of them. 
\end{rem}

\begin{proof} [Proof of Theorem \ref{mainresult}]
It turns out the conditions (i) and (ii) above are indeed equivalent to the conditions (1) and (2) in Theorem \ref{mainresult01}, which indeed suggests Theorem \ref{mainresult} is a much more natural statement to consider.  We would like to leave the detail to the interested reader. 
\end{proof}

\medskip
\subsection{Example revisited}

Let us use Theorem \ref{mainresult} to verify the adjacency of the dyadic grids considered in Section 5. We start by verifying Condition (i) in Theorem \ref{mainresult}.  This means we have to show
\begin{enumerate}
    \item [(a).] $\del_1$ is $2$-far with respect to $\frakL\left(\del_2, \del_3\right)$;
    \item [(b).] $\del_2$ is $2$-far with respect to $\frakL\left(\del_1, \del_3\right)$;
    \item [(c).] $\del_3$ is $2$-far with respect to $\frakL\left(\del_1, \del_2\right)$.
\end{enumerate}
We will show (c).  First, it is easy to see that $\{\del_1, \del_2, \del_3\}$ are separated. Note that calculating the quantity \eqref{20200319eq02} for $m=1$ reduces to calculating in a ``local region" (see, the cyan part in Figure \ref{Fig10}), as both $b[\calG(\del_3)]_1$ and $\frakL(\del_1, \del_2; 1)$ behave periodically. Therefore, we can see that

$$
\textrm{dist} \left(b\left[\calG(\del_3)\right]_1,  \frakL(\del_1,\del_2; 1) \right) = \min\left\{\frac{1}{3}-\frac{1}{5}, \ \frac{1}{5}-\frac{1}{6} \right\} = \frac{1/15}{2}.
$$

\begin{center}
\begin{figure}[ht]
\begin{tikzpicture}[scale=6]
\draw (0, -.5) [->] -- (0, 1.5);
\draw (-.5, 0) [->] -- (1.5,0);
\fill [opacity=1, red] (1/5, 1/5) circle [radius=.3pt];
\draw [line width=0.25mm,  red ] (-.5, .2)--(1.5, .2);
\draw [line width=0.25mm,  red ] (-.5, .7)--(1.5, .7);
\draw [line width=0.25mm,  red ] (-.5, 1.2)--(1.5, 1.2);
\draw [line width=0.25mm,  red ] (-.5, -.3)--(1.5, -.3);
\draw [line width=0.25mm, red ] (.2, -.5)--(.2, 1.5);
\draw [line width=0.25mm, red ] (.7, -.5)--(.7, 1.5);
\draw [line width=0.25mm, red ] (1.2, -.5)--(1.2, 1.5);
\draw [line width=0.25mm, red ] (-.3, -.5)--(-.3, 1.5);
\fill (.24, .21) node [above] {$\del_3$};
\fill [opacity=1, blue] (1/3, 1/3) circle [radius=.3pt];
\draw [line width=0.25mm, dashed, blue ] (-.5, 1/3)--(1.5, 1/3);
\draw [line width=0.25mm, dashed, blue ] (-.5, -1/6)--(1.5, -1/6);
\draw [line width=0.25mm, dashed, blue] (-.5, 5/6)--(1.5, 5/6);
\draw [line width=0.25mm, dashed, blue ] (-.5, 4/3)--(1.5, 4/3);
\draw [line width=0.25mm, dashed, blue ] (1/3, -.5)--(1/3, 1.5);
\draw [line width=0.25mm, dashed, blue] (5/6, -.5)--(5/6, 1.5);
\draw [line width=0.25mm, dashed, blue] (4/3, -.5)--(4/3, 1.5);
\draw [line width=0.25mm, dashed, blue] (-1/6, -.5)--(-1/6, 1.5);
\fill (.3, .33) node [below] {$\del_1$};
\fill [opacity=1, darkgray] (2/3, 2/3) circle [radius=.3pt];
\draw [line width=0.25mm, darkgray, dashed] (-.5, 2/3)--(1.5, 2/3);
\draw [line width=0.25mm, darkgray, dashed] (-.5, 1/6)--(1.5, 1/6);
\draw [line width=0.25mm, darkgray, dashed] (-.5, -1/3)--(1.5, -1/3);
\draw [line width=0.25mm, darkgray, dashed] (-.5, 7/6)--(1.5, 7/6);
\draw [line width=0.25mm, darkgray, dashed] (2/3, -.5)--(2/3, 1.5);
\draw [line width=0.25mm, darkgray, dashed] (1/6, -.5)--(1/6, 1.5);
\draw [line width=0.25mm, darkgray, dashed] (-1/3, -.5)--(-1/3, 1.5);
\draw [line width=0.25mm, darkgray, dashed] (7/6, -.5)--(7/6, 1.5);
\fill (.62, .66) node [below] {$\del_2$};
\fill [opacity=1, green] (1/3, 2/3) circle [radius=.4pt];
\fill [opacity=1, green] (2/3, 1/3) circle [radius=.4pt];
\fill [opacity=1, green] (1/6, 5/6) circle [radius=.4pt];
\fill [opacity=1, green] (5/6, 1/6) circle [radius=.4pt];
\fill [opacity=1, green] (2/3, 5/6) circle [radius=.4pt];
\fill [opacity=1, green] (5/6, 2/3) circle [radius=.4pt];
\fill [opacity=1, green] (1/6, 1/3) circle [radius=.4pt];
\fill [opacity=1, green] (-1/6, 1/6) circle [radius=.4pt];
\fill [opacity=1, green] (1/6, -1/6) circle [radius=.4pt];
\fill [opacity=1, green] (-1/6, 2/3) circle [radius=.4pt];
\fill [opacity=1, green] (2/3, -1/6) circle [radius=.4pt];
\fill [opacity=1, green] (-1/3, -1/6) circle [radius=.4pt];
\fill [opacity=1, green] (-1/6, -1/3) circle [radius=.4pt];
\fill [opacity=1, green] (-1/3, 1/3) circle [radius=.4pt];
\fill [opacity=1, green] (1/3, -1/3) circle [radius=.4pt];
\fill [opacity=1, green] (-1/3, 5/6) circle [radius=.4pt];
\fill [opacity=1, green] (5/6, -1/3) circle [radius=.4pt];
\fill [opacity=1, green] (-1/3, 4/3) circle [radius=.4pt];
\fill [opacity=1, green] (4/3, -1/3) circle [radius=.4pt];
\fill [opacity=1, green] (-1/6, 7/6) circle [radius=.4pt];
\fill [opacity=1, green] (7/6, -1/6) circle [radius=.4pt];
\fill [opacity=1, green] (1/6, 4/3) circle [radius=.4pt];
\fill [opacity=1, green] (4/3, 1/6) circle [radius=.4pt];
\fill [opacity=1, green] (1/3, 7/6) circle [radius=.4pt];
\fill [opacity=1, green] (7/6, 1/3) circle [radius=.4pt];
\fill [opacity=1, green] (2/3, 4/3) circle [radius=.4pt];
\fill [opacity=1, green] (4/3, 2/3) circle [radius=.4pt];
\fill [opacity=1, green] (5/6, 7/6) circle [radius=.4pt];
\fill [opacity=1, green] (7/6, 5/6) circle [radius=.4pt];
\fill [opacity=1, green] (7/6, 4/3) circle [radius=.4pt];
\fill [opacity=1, green] (4/3, 7/6) circle [radius=.4pt];
\fill [opacity=1, green] (4/3, 7/6) circle [radius=.4pt];
\fill [opacity=1, green] (7/6, 4/3) circle [radius=.4pt];
\draw [line width=0.8mm, dashed, cyan] (.28, .28) circle (7pt);
\draw [line width=.8mm, magenta ] (1/5, 1/6)--(1/3, 1/6);
\draw [line width=.8mm, magenta ] (1/3, 1/6)--(1/3, 1/5);
\fill [opacity=1, green] (1/3, 1/6) circle [radius=.4pt];
\end{tikzpicture}
\caption{$b[\calG(\del_3)]_1$ (red part), the small scale lattice $\frakL(\del_1, \del_2; 1)$ (green part), the line segment with length $\frac{1}{3}-\frac{1}{5}=\frac{2}{15}$ (the horizontal magenta colored line) and the line segment with length $\frac{1}{5}-\frac{1}{6}=\frac{1}{30}$ (the vertical magenta colored line).}
\label{Fig10}
\end{figure}
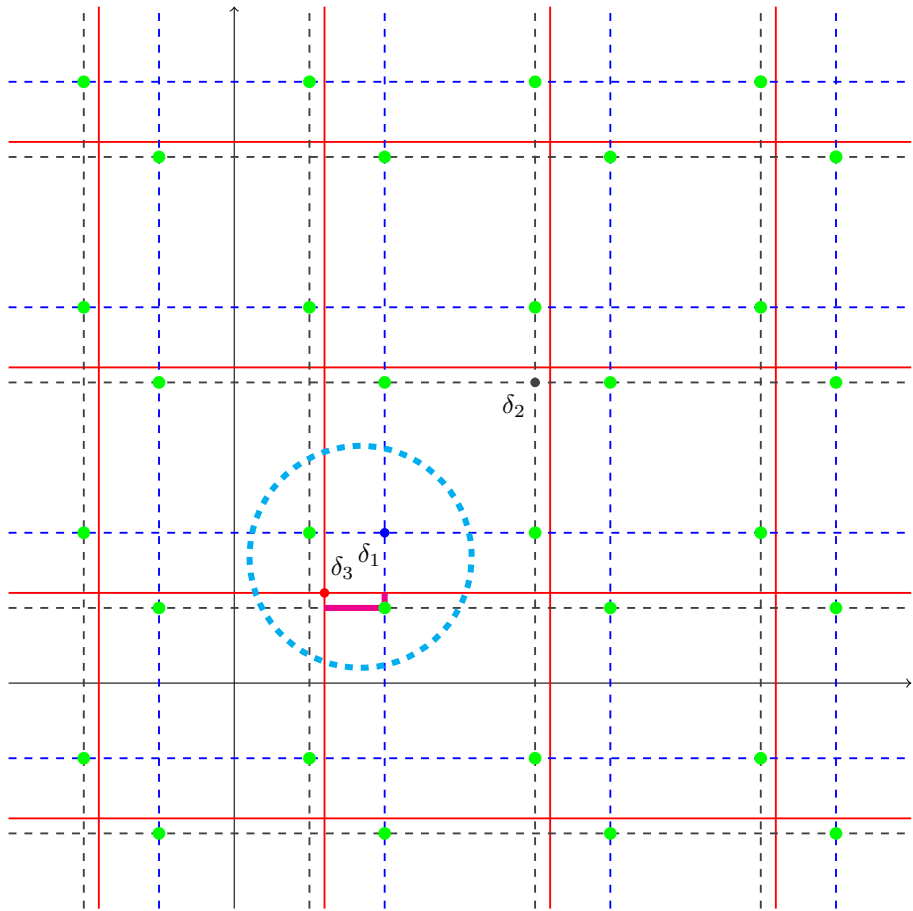
\end{center}

It turns out that this case $m=1$ already illustrates the main point.  The case $m=0$ is even easier, and for all other $m>0$ we will either get 
\begin{eqnarray*}
\textrm{dist} \left(b\left[\calG(\del_3)\right]_m,  \frakL(\del_1,\del_2; m) \right)%
&=& \inf_{k_1, k_2 \in \Z} \left|\left(\frac{1}{3} \pm \frac{k_1}{2^m} \right)- \left(\frac{1}{5} \pm \frac{k_2}{2^m} \right) \right| \\
&\geq& \inf_{k_1, k_2 \in \Z} \left|\frac{2}{15} - \frac{k}{2^m} \right| \ge  \frac{C}{2^m}
\end{eqnarray*}
by techniques in \cite[Proposition 2.4]{AHJOW} (see, also Lemma 3 in \cite{A}), or a similar claim where $\frac{1}{3}$ is replaced by $\frac{2}{3}$.  The other calculations for (a) and (b) are similar. For example, by an easy modification of the above arguments, we have for any $m \ge 0$, 
$$
\textrm{dist} \left(b\left[\calG(\del_1)\right]_m,  \frakL(\del_2,\del_3; m) \right)
$$
(this is for (a)) is either
$$
 \inf_{k_1, k_2 \in \Z} \left|\left(\frac{2}{3} \pm \frac{k_1}{2^m} \right)- \left(\frac{1}{3} \pm \frac{k_2}{2^m} \right) \right| 
 $$
 or
 $$
  \inf_{k_1, k_2 \in \Z} \left|\left(\frac{1}{5} \pm \frac{k_1}{2^m} \right)- \left(\frac{1}{3} \pm \frac{k_2}{2^m} \right) \right| 
$$
which is clearly bounded below by some $\frac{C}{2^m}$, where $C$ is independent of the choice of $m$.  We leave the details to the interested reader.

\medskip

We now verify Condition (ii).  We first calculate the limit infimum,  and begin by examining the case $j=1$.  In this case, it is clear from the Figure \ref{Fig11} that 
\[
\frac{\textrm{dist} \left( (m\calC)_{\bfVa_1}^{\del_1} (1), \calS_{\bfVa_2, \bfVa_{3}}^{\del_2, \del_{3}}(1) \right)}{2} =  \frac{1}{3}
\]
whereas
\[
\frac{\textrm{dist} \left( (m\calC)_{\bfVa_2}^{\del_2} (1), \calS_{\bfVa_1, \bfVa_{3}}^{\del_1, \del_{3}}(1) \right)}{2} =
 \frac{\textrm{dist} \left( (m\calC)_{\bfVa_3}^{\del_3} (1), \calS_{\bfVa_1, \bfVa_{2}}^{\del_1, \del_{2}}(1) \right)}{2} = \frac{|2/3-1/5|}{2} =\frac{7}{15}.
\]

\begin{center}
\begin{figure}[ht]
\begin{tikzpicture}[scale=1.6]
\draw (0, -.1) [->] -- (0, 2.2);
\draw (-.1, 0) [->] -- (2.2,0);
\draw [dashed] (2, 0)--(2, 2)--(0, 2); 
\fill [opacity=1, red] (4/3, 4/3) circle [radius=1pt];
\draw [line width=0.25mm, red ] (4/3, 0)--(4/3, 2);
\draw [line width=0.25mm, red ] (0, 4/3)--(2, 4/3); 
\fill (2, 0) node [below] {$2$};
\fill (0, 2) node [left]  {$2$}; 
\fill (4/3, 0) node [below] {$\frac{4}{3}$};
\fill (0, 4/3) node [left]  {$\frac{4}{3}$};
\draw [dashed] (2/3, 0)--(2/3, 1/5)--(0, 1/5); 
\draw [dashed] (1/5, 0)--(1/5, 2/3)--(0, 2/3);
\fill [opacity=1, green] (2/3, 1/5) circle [radius=1pt];
\fill [opacity=1, green] (1/5, 2/3) circle [radius=1pt];
\fill (2/3, 0) node [below] {$\frac{2}{3}$};
\fill (0, 2/3) node [left] {$\frac{2}{3}$};
\fill (1/5, 0) node [below] {$\frac{1}{5}$};
\fill  (0, 1/5) node [left] {$\frac{1}{5}$};
\draw (3, -.1) [->] -- (3, 2.2);
\draw (2.9, 0) [->] -- (5.2,0);
\draw [dashed] (5, 0)--(5, 2)--(3, 2); 
\fill (5, 0) node [below] {$2$};
\fill (3, 2) node [left]  {$2$}; 
\fill [opacity=1, red] (11/3, 2/3) circle [radius=1pt];
\draw [line width=0.25mm, red ] (3, 2/3)--(5, 2/3);
\draw [line width=0.25mm, red ] (11/3, 0)--(11/3, 2);
\fill (11/3, 0) node [below] {$\frac{2}{3}$};
\fill (3, 2/3) node [left] {$\frac{2}{3}$};
\draw [dashed] (13/3, 0)--(13/3, 1/5)--(3, 1/5);
\draw [dashed] (16/5, 0)--(16/5, 4/3)--(3, 4/3);
\fill [opacity=1, green] (13/3, 1/5) circle [radius=1pt];
\fill [opacity=1, green] (16/5, 4/3) circle [radius=1pt];
\fill (13/3, 0) node [below] {$\frac{4}{3}$};
\fill (16/5, 0) node [below] {$\frac{1}{5}$};
\fill (3, 1/5) node [left] {$\frac{1}{5}$};
\fill (3, 4/3) node [left] {$\frac{4}{3}$};
\draw (6, -.1) [->] -- (6, 2.2);
\draw (5.9, 0) [->] -- (8.2,0);
\draw [dashed] (8, 0)--(8, 2)--(6, 2); 
\fill (8, 0) node [below] {$2$};
\fill (6, 2) node [left]  {$2$}; 
\fill [opacity=1, red] (6.2, .2) circle [radius=1pt];
\draw [line width=0.25mm, red ] (6, 1/5)--(8, 1/5);
\draw [line width=0.25mm, red ] (6.2, 0)--(6.2, 2);
\fill (6.2, 0) node [below] {$\frac{1}{5}$};
\fill (6, .2) node [left] {$\frac{1}{5}$};
\draw [dashed] (22/3, 0)--(22/3, 2/3)--(6, 2/3);
\draw [dashed] (20/3, 0)--(20/3, 4/3)--(6, 4/3);
\fill [opacity=1, green] (22/3, 2/3) circle [radius=1pt];
\fill [opacity=1, green] (20/3, 4/3) circle [radius=1pt];
\fill (22/3, 0) node [below] {$\frac{4}{3}$};
\fill (20/3, 0) node [below] {$\frac{2}{3}$};
\fill (6, 2/3) node [left] {$\frac{2}{3}$};
\fill (6, 4/3) node [left] {$\frac{4}{3}$};
\fill (.7, 1.4) node [above] {$(m\calC)_{\bfVa_1}^{\del_1}(1)$};
\fill (4.4, .8) node [above] {$(m\calC)_{\bfVa_2}^{\del_2}(1)$};
\fill (6.2, 1.7) node [right] {$(m\calC)_{\bfVa_3}^{\del_3}(1)$};
\end{tikzpicture}
\caption{All modulated corner sets $(m\calC)_{\bfVa_i}^{\del_i}(1)$ for $i=1, 2, 3$ (red parts) and all corresponding large scale samplings $\calS_{\bfVa_1, \dots, \widehat{\bfVa_i}, \dots, \bfVa_3}^{\del_1, \dots, \widehat{\del_i}, \dots, \del_3}(1)$ for $i=1, 2, 3$ (green parts).}
\label{Fig11}
\end{figure}
\end{center}

Let us also calculate the case when $j=2$ (see, Figure \ref{Fig12}). In this case, we have
\[
\frac{\textrm{dist} \left( (m\calC)_{\bfVa_2}^{\del_2} (2), \calS_{\bfVa_1, \bfVa_{3}}^{\del_2, \del_{3}}(2) \right)}{4} =  \frac{1}{3}
\]
whereas
\[
\frac{\textrm{dist} \left( (m\calC)_{\bfVa_1}^{\del_1} (2), \calS_{\bfVa_2, \bfVa_{3}}^{\del_2, \del_{3}}(2) \right)}{4} =
 \frac{\textrm{dist} \left( (m\calC)_{\bfVa_3}^{\del_3} (2), \calS_{\bfVa_1, \bfVa_{2}}^{\del_1, \del_{2}}(2) \right)}{4} = \frac{4/3-1/5}{2} =\frac{17}{60}.
\]

\begin{center}
\begin{figure}[ht]
\begin{tikzpicture}[scale=.8]
\draw (0, -.1) [->] -- (0, 4.4);
\draw (-.1, 0) [->] -- (4.4,0);
\draw [dashed] (4, 0)--(4, 4)--(0, 4); 
\fill [opacity=1, red] (4/3, 4/3) circle [radius=2pt];
\draw [line width=0.25mm, red ] (4/3, 0)--(4/3, 4);
\draw [line width=0.25mm, red ] (0, 4/3)--(4, 4/3); 
\fill (4, 0) node [below] {$4$};
\fill (0, 4) node [left]  {$4$}; 
\fill (4/3, 0) node [below] {$\frac{4}{3}$};
\fill (0, 4/3) node [left]  {$\frac{4}{3}$};
\draw [dashed] (8/3, 0)--(8/3, 1/5)--(0, 1/5); 
\draw [dashed] (1/5, 0)--(1/5, 8/3)--(0, 8/3);
\fill [opacity=1, green] (8/3, 1/5) circle [radius=2pt];
\fill [opacity=1, green] (1/5, 8/3) circle [radius=2pt];
\fill (8/3, 0) node [below] {$\frac{8}{3}$};
\fill (0, 8/3) node [left] {$\frac{8}{3}$};
\fill (1/5, 0) node [below] {$\frac{1}{5}$};
\fill  (0, 1/5) node [left] {$\frac{1}{5}$};
\draw (6, -.1) [->] -- (6, 4.4);
\draw (5.8, 0) [->] -- (10.4,0);
\draw [dashed] (10, 0)--(10, 4)--(6, 4); 
\fill (10, 0) node [below] {$4$};
\fill (6, 4) node [left]  {$4$}; 
\fill [opacity=1, red] (26/3, 8/3) circle [radius=2pt];
\draw [line width=0.25mm, red ] (6, 8/3)--(10, 8/3);
\draw [line width=0.25mm, red ] (26/3, 0)--(26/3, 4);
\fill (26/3, 0) node [below] {$\frac{8}{3}$};
\fill (6, 8/3) node [left] {$\frac{8}{3}$};
\draw [dashed] (22/3, 0)--(22/3, 1/5)--(6, 1/5);
\draw [dashed] (31/5, 0)--(31/5, 4/3)--(6, 4/3);
\fill [opacity=1, green] (22/3, 1/5) circle [radius=2pt];
\fill [opacity=1, green] (31/5, 4/3) circle [radius=2pt];
\fill (22/3, 0) node [below] {$\frac{4}{3}$};
\fill (31/5, 0) node [below] {$\frac{1}{5}$};
\fill (6, 1/5) node [left] {$\frac{1}{5}$};
\fill (6, 4/3) node [left] {$\frac{4}{3}$};
\draw (12, -.1) [->] -- (12, 4.4);
\draw (11.8, 0) [->] -- (16.4,0);
\draw [dashed] (16, 0)--(16, 4)--(12, 4); 
\fill (16, 0) node [below] {$4$};
\fill (12, 4) node [left]  {$4$}; 
\fill [opacity=1, red] (12.2, .2) circle [radius=2pt];
\draw [line width=0.25mm, red ] (12, 1/5)--(16, 1/5);
\draw [line width=0.25mm, red ] (12.2, 0)--(12.2, 4);
\fill (12.2, 0) node [below] {$\frac{1}{5}$};
\fill (12, .2) node [left] {$\frac{1}{5}$};
\draw [dashed] (40/3, 0)--(40/3, 8/3)--(12, 8/3);
\draw [dashed] (44/3, 0)--(44/3, 4/3)--(12, 4/3);
\fill [opacity=1, green] (40/3, 8/3) circle [radius=2pt];
\fill [opacity=1, green] (44/3, 4/3) circle [radius=2pt];
\fill (44/3, 0) node [below] {$\frac{8}{3}$};
\fill (40/3, 0) node [below] {$\frac{4}{3}$};
\fill (12, 4/3) node [left] {$\frac{4}{3}$};
\fill (12, 8/3) node [left] {$\frac{8}{3}$};
\fill (2.4, 2.8) node [above] {$(m\calC)_{\bfVa_1}^{\del_1}(2)$};
\fill (7.5, 1.6) node [above] {$(m\calC)_{\bfVa_2}^{\del_2}(2)$};
\fill (12.3, 3.4) node [right] {$(m\calC)_{\bfVa_3}^{\del_3}(2)$};
\end{tikzpicture}
\caption{All modulated corner sets $(m\calC)_{\bfVa_i}^{\del_i}(2)$ for $i=1, 2, 3$ (red parts) and all corresponding large scale samplings $\calS_{\bfVa_1, \dots, \widehat{\bfVa_i}, \dots, \bfVa_3}^{\del_1, \dots, \widehat{\del_i}, \dots, \del_3}(2)$ for $i=1, 2, 3$ (green parts).}
\label{Fig12}
\end{figure}
\end{center}

Similarly, the cases $j=1, 2$ (that is, $m=-1$ and $-2$, respectively) contain all the main ideas for the large scale case (that is, $m<0$). Recall for $j \ge 0$, 
$$
\del_1+\calL_{\bfVa_1}(j)= \begin{cases} 
\left(\frac{2^{j}}{3}, \frac{2^{j}}{3} \right),  & \hfill j \ \textrm{even} \\
\\
\left(\frac{2^{j+1}}{3}, \frac{2^{j+1}}{3} \right),  & \hfill j \ \textrm{odd}
\end{cases}, 
\quad 
\del_2+\calL_{\bfVa_2}(j)= \begin{cases} 
\left(\frac{2^{j+1}}{3}, \frac{2^{j+1}}{3} \right), & \hfill j  \ \textrm{even} \\
\\
\left(\frac{2^{j}}{3}, \frac{2^{j}}{3} \right), & \hfill j \ \textrm{odd}
\end{cases}, 
$$
and 
$$
\del_3+\calL_{\bfVa_3}(j) \equiv \left(\frac{1}{5}, \frac{1}{5} \right).
$$

This suggests that the first two limit infimums, which are, 
$$
\liminf_{j \to \infty} \frac{\textrm{dist} \left( (m\calC)_{\bfVa_1}^{\del_1} (j), \calS_{\bfVa_2, \bfVa_{3}}^{\del_2, \del_{3}}(j) \right)}{2^j} \quad \textrm{and} \quad  \liminf_{j \to \infty} \frac{\textrm{dist} \left( (m\calC)_{\bfVa_2}^{\del_2} (j), \calS_{\bfVa_1, \bfVa_{3}}^{\del_1, \del_{3}}(j) \right)}{2^j},
$$
are either 
$$
\lim_{j \to \infty} \frac{2^{j+1}/3-2^j/3}{2^j}=\frac{1}{3}
$$
or 
$$
\lim_{j \to \infty} \frac{2^j/3-1/5}{2^j}=\frac{1}{3}.
$$
To see this, one may consider two different cases by requiring $j$ being even or being odd, the desired claim will then follow from plotting the corresponding modulated corner sets and large scale samplings out, as the first two graphs illustrated in Figure \ref{Fig11}. 

While for the third limit infimum, that is, 
$$
 \liminf_{j \to \infty} \frac{\textrm{dist} \left( (m\calC)_{\bfVa_3}^{\del_3} (j), \calS_{\bfVa_1, \bfVa_{2}}^{\del_1, \del_{2}}(j) \right)}{2^j},
$$
we can see that it will be
$$
\lim_{j \to \infty} \frac{2^j/3-1/5}{2^j}=\frac{1}{3}.
$$
One may consult the third graph in Figure \ref{Fig11} for a visualization of this case. Overall, we can take $\frac{1}{3}$ as the limit infimum in Condition (ii) of our main result.

The calculations for the limit supreme are similar and can be visualized from Figure \ref{Fig11}. More precisely, we can see that the quantity
$$
\max_{i=1,2,3} \left[\limsup_{j \to \infty} \frac{\max_{y' \in \calS_{\bfVa_1,  \dots, \widehat{\bfVa_i}, \dots, \bfVa_3}^{\del_1, \dots, \widehat{\del_i}, \dots, \del_3}(j)} \textrm{dev}\left( (m\calC)_{\bfVa_i}^{\del_i} (j), y' \right)}{2^j}\right]
$$
is the limit
$$
\lim_{j \to \infty} \frac{2^{j+1}/3 - 1/5}{2^j} = \frac{2}{3}.
$$
This verifies Condition (ii) in Theorem \ref{mainresult}.  \hfill $\square$

\end{document}